%% file: main.tex
\documentclass[11pt,a4paper]{amsart}
\newcommand{\Title}{Regularity for Subfactors}%
\newcommand{\ShortTitle}{\Title}%

\newcommand{\AuthorOne}{Keshab Chandra Bakshi}%
\newcommand{\AuthorOneAddr}{%
	Department of Mathematics, Indian Institute of Technology Kanpur, Uttar Pradesh 208 016, India
}%

\newcommand{\AuthorOneEmail}{%
	 bakshi209@gmail.com, keshab@iitk.ac.in
}%

\newcommand{\AuthorTwo}{Indrajit Ghosh}%
\newcommand{\AuthorTwoAddr}{%
	Department of Mathematics, Indian Institute of Technology Kanpur, Uttar Pradesh 208 016, India
}%

\newcommand{\AuthorTwoEmail}{%
	indrajitghosh912@gmail.com, indrajitg@iitk.ac.in
}%

\newcommand{\SubjectClassText}{Primary 46L05, 47L40; Secondary 46L10}

\newcommand{\Keywords}{Regularity, Unitary Regularity, Normalisers, Subfactor, Type $II_1, II_\infty, III$ factor}

\newcommand{\pdfTitle}{\Title}
\newcommand{\pdfAuthor}{Indrajit Ghosh}
\newcommand{\pdfSubject}{Mathematics, Research paper}
\newcommand{\pdfKeywords}{\Keywords}
\newcommand{\pdfCreator}{TeXlive}
\newcommand{\pdfCreationDate}{\today}
\newcommand{\pdfColorLink}{true}
\newcommand{\pdfLinkColor}{cyan}
\newcommand{\pdfUrlColor}{blue}
\newcommand{\pdfCiteColor}{magenta}
\input{preamble}

\begin{document}
	
%%%%%%%%%%%%%%%%%%%%%%%%%%%%%%%%%%%%%%%%%%%%%%%%%%%%%%%%%%%%%%%%%%%%%%
        \title[\ShortTitle]{\MakeUppercase\Title}
	%    Information for first author
	\author{\AuthorOne}
	%    Address of record for the research reported here
	\address[Keshab C. Bakshi]{\AuthorOneAddr}
	%    Current address
	
	\email{\AuthorOneEmail}

    % Author Two
    \author{\AuthorTwo}
	%    Address of record for the research reported here
	\address[Indrajit Ghosh]{\AuthorTwoAddr}
	\email{\AuthorTwoEmail}

	%    General info
	\date{}
	\subjclass{\SubjectClassText}
	\keywords{\Keywords}
	
%%%%%%%%%%%%%%%%%%%%%%%%%%%%%%%%%%%%%%%%%%%%%%%%%%%%%%%%%%%%%%%%%%%%%%
	
	\begin{abstract}
		\input{sections/abstract}
	\end{abstract}

	\maketitle%
        \thispagestyle{empty}%

    % \tableofcontents

    \section{Introduction}
    \input{sections/intro}
    
    \section{Preliminaries}
    \label{sec:prelims}
    \input{sections/prelims}

    \section{Regularity of unital Inclusions of von Neumann Algebras}
    \label{sec:reg-subfactor}
    \input{sections/block-diagonal-subalg}

    \section{Regularity of Irreducible Inclusion of Simple $C^*$-algebras}
    \label{sec:reg-simple}
    \input{sections/simple_case}

    \section{Acknowledgement}
    \input{sections/acknowledgement}
    
	\medskip
	%\nocite{*}
	
	\bibliographystyle{amsalpha} % Also use `amsplain`
	\bibliography{references}

\end{document}

%% file: preamble.tex
\usepackage[top=0.9in, bottom=1in, left=0.7in, right=0.7in]{geometry}
\usepackage{amsmath, amssymb, amsthm} % amssymb internally loads amsfonts
\usepackage[utf8]{inputenc}
\usepackage[T1]{fontenc}
\usepackage{mathtools}
\usepackage{mathrsfs} % renders \mathscr cmd
\usepackage{xfrac} % renders diagonal frac notation: use \sfrac{}{}
\usepackage{dsfont} % renders '1' for characteristic function...
\usepackage{array}
\usepackage{verbatim}
\usepackage{graphicx}
\usepackage{mdframed}
\usepackage{enumitem} % Give extra customization on top of itemize and enumerate
\usepackage{hyperref}
\hypersetup{
	pdftitle={\pdfTitle},
	pdfauthor={\pdfAuthor},
	pdfsubject={\pdfSubject},
	pdfcreationdate={\pdfCreationDate},
	pdfcreator={\pdfCreator},
	pdfkeywords={\pdfKeywords},
	colorlinks=\pdfColorLink,
	linkcolor={\pdfLinkColor},
	urlcolor=\pdfUrlColor,
	citecolor=\pdfCiteColor,
	pdfpagemode=UseOutlines,
}
\usepackage{tikz-cd} % Online editor: https://tikzcd.yichuanshen.de/
\usepackage{lipsum}
\usetikzlibrary{matrix,arrows}
\usepackage[english]{babel}
\usepackage{lmodern}
\usepackage{bbm} % For typing `set of natural nums`, e.g - \mathbbm{N}
\usepackage[dvipsnames]{xcolor}
\usepackage[most]{tcolorbox}
\usepackage{xparse}

\theoremstyle{plain}
\newtheorem{theorem}{Theorem}[section]
\newtheorem{prop}[theorem]{Proposition}
\newtheorem{lem}[theorem]{Lemma}
\newtheorem{cor}[theorem]{Corollary}

\theoremstyle{definition}
\newtheorem{definition}[theorem]{Definition}
\newtheorem{example}[theorem]{Example}

\theoremstyle{remark}
\newtheorem{remark}[theorem]{Remark}

\numberwithin{equation}{section}

\newtheoremstyle{ser}% name
{8pt}% Space above
{8pt}% Space below
{\it}% Body font
{}% Indent amount
{\sf}% Theorem head font
{:}% Punctuation after theorem head
{6mm}% Space after theorem head
{}% Theorem head spec (can be left empty, meaning `normal')

\theoremstyle{ser}

\newtheoremstyle{serr}% name
{8pt}% Space above
{8pt}% Space below
{\normalfont}% Body font
{}% Indent amount
{\sf}% Theorem head font
{.}% Punctuation after theorem head
{6mm}% Space after theorem head
{}% Theorem head spec (can be left empty, meaning `normal')

\theoremstyle{serr}

\theoremstyle{ser}

\theoremstyle{ser}

\newtheoremstyle{collabquestion}
  {8pt}
  {8pt}
  {\normalfont}
  {}
  {\sffamily\bfseries\color{blue!70!black}}
  {.}
  {.5em}
  {}

\theoremstyle{collabquestion}
\newtheorem{qninner}{Question}

\definecolor{indraRed}{rgb}{0.593, 0.183, 0.183}
\definecolor{indraPink}{rgb}{0.858, 0.188, 0.478}
\definecolor{indraBlue}{rgb}{0, 0.199, 0.398}
\definecolor{madridBlue}{rgb}{0.199, 0.199, 0.695}
\definecolor{metropolisThemeColor}{rgb}{0.105, 0.214, 0.234}
\definecolor{metropolisBarColor}{rgb}{0.984, 0.0.515, 0.015}
\definecolor{UBCblue}{rgb}{0.04706, 0.13725, 0.26667} % UBC Blue (primary)
\definecolor{UBCgrey}{rgb}{0.3686, 0.5255, 0.6235} % UBC Grey (secondary)

\makeatletter
\def\mathcolor#1#{\@mathcolor{#1}}
\def\@mathcolor#1#2#3{%
	\protect\leavevmode
	\begingroup
	\color#1{#2}#3%
	\endgroup
}
\makeatother
\makeatletter
\def\ps@headings{\ps@empty
  \def\@evenhead{\normalfont\scriptsize\hfil \leftmark{}{}\hfil}%
  \def\@oddhead{\normalfont\scriptsize\hfil \rightmark{}{}\hfil}%
  \let\@mkboth\markboth
  \def\@evenfoot{\normalfont\scriptsize\hfil\thepage\hfil}%
  \def\@oddfoot{\normalfont\scriptsize\hfil\thepage\hfil}%
}
\makeatother

\makeatletter
\def\author@andify{%
  \nxandlist {\unskip ,\penalty-1 \space\ignorespaces}%
    {\unskip {} \@@and~}%
    {\unskip \penalty-2 \space \@@and~}%
}
\makeatother
\input{macros}

%% file: macros.tex
\definecolor{indraRed}{rgb}{0.593, 0.183, 0.183}
\definecolor{indraPink}{rgb}{0.858, 0.188, 0.478}
\definecolor{indraBlue}{rgb}{0, 0.199, 0.398}
\definecolor{madridBlue}{rgb}{0.199, 0.199, 0.695}
\definecolor{metropolisThemeColor}{rgb}{0.105, 0.214, 0.234}
\definecolor{metropolisBarColor}{rgb}{0.984, 0.0.515, 0.015}
\definecolor{UBCblue}{rgb}{0.04706, 0.13725, 0.26667} % UBC Blue (primary)
\definecolor{UBCgrey}{rgb}{0.3686, 0.5255, 0.6235} % UBC Grey (secondary)

\newcommand{\spn}{{\operatorname{span}\,}}

\newcommand{\C}{\mathbb{C}}

\NewDocumentCommand{\mn}{m O{\mathbb{C}}}
{\mathbb{M}_{#1}(#2)} % matrix algebra for specific n

     \newcommand{\sA}{\mathcal A}		
     \newcommand{\sB}{\mathcal B}		
     		
     \newcommand{\sD}{\mathcal D}

     \newcommand{\sG}{\mathcal G}

     \newcommand{\sJ}{\mathcal J}		
     \newcommand{\sL}{\mathcal L}		
     \newcommand{\sM}{\mathcal M}		
     \newcommand{\sN}{\mathcal N}

     \newcommand{\sS}{\mathcal S}		
     		
     \newcommand{\sU}{\mathcal U}

     \newcommand{\sZ}{\mathcal Z}

\NewDocumentCommand{\nor}{g}
  {\sN\IfValueT{#1}{_{#1}}}

\NewDocumentCommand{\unor}{g}
  {\sU\sN\IfValueT{#1}{_{#1}}}

\NewDocumentCommand{\grnor}{g}
  {\sG\sN\IfValueT{#1}{_{#1}}}

\newcommand{\wkcl}[1]{\overline{#1}^{\textrm{wot}}}
\newcommand{\maxprime}{\textsc{Max}$'$\,}
\newcommand{\I}{\mathrm{I}}
\newcommand{\II}{\mathrm{II}}
\newcommand{\III}{\mathrm{III}}

%% file: sections/abstract.tex
We study three naturally associated notions of regularity for a unital inclusion of von Neumann algebras $\mathcal{B} \subseteq \mathcal{M}$: regularity, groupoid regularity, and unitary regularity. We first observe that regularity and groupoid regularity are unconditionally equivalent by showing that the standard normaliser and the groupoid normaliser of $\mathcal{B}$ in $\mathcal M$ generate the same linear span. The central question addressed in this work is whether regularity implies unitary regularity. For arbitrary von Neumann subalgebras, we demonstrate that this implication fails dramatically; we construct explicit examples to show that every factor (of type $\mathrm{I}_\infty$, $\mathrm{II}_1$, $\mathrm{II}_\infty$, and $\mathrm{III}$) contains a regular von Neumann subalgebra that is not unitarily regular. In stark contrast, we prove that the behaviour changes completely in the setting of subfactors. For subfactor inclusions across all these types, regularity always implies unitary regularity, and thus all three notions of regularity coincide. Finally, we establish an analogous result for irreducible unital inclusions of simple $C^*$-algebras admitting a conditional expectation of finite Watatani index. In this setting, we prove that regularity, unitary regularity, being a generalised Cartan subalgebra, and arising from a crossed product by a finite group are all mutually equivalent conditions.

%% file: sections/intro.tex
The study of von Neumann subalgebras and their normalisers is fundamental to the structural theory of operator algebras. Normalisers capture the symmetries of a subalgebra and play a central role in the theory of Cartan subalgebras, crossed product constructions, and subfactors. Given a unital inclusion of von Neumann algebras $\sB \subseteq \sM$, with $1_{\sB}=1_{\sM}$, there are several natural classes of elements in $\sM$ that can be regarded as normalising $\sB$. These include arbitrary normalising elements, partial isometries, and unitaries. Each class leads naturally to a corresponding notion of regularity, and understanding the relationships among these notions is the main theme of this paper.

We consider three types of normalisers of $\sB$ in $\sM$. The \emph{normaliser} of $\sB$ in $\sM$ is
\[
\nor{\sM}(\sB)
:= \{\, n \in \sM : n\sB n^* \cup n^*\sB n \subseteq \sB \,\}.
\]
The \emph{groupoid normaliser} consists of those normalising elements which are partial isometries:
\[
\grnor{\sM}(\sB)
:= \{\, v \in \nor{\sM}(\sB) : vv^*v=v \,\}.
\]
Finally, the \emph{unitary normaliser} is the group of unitaries in $\sM$ which normalise $\sB$:
\[
\unor{\sM}(\sB)
:= \{\, u \in \sU(\sM) : u\sB u^*=\sB \,\}.
\]
Thus,
\[
\unor{\sM}(\sB)
\subseteq
\grnor{\sM}(\sB)
\subseteq
\nor{\sM}(\sB).
\]
These three classes give rise to three corresponding notions of regularity. We say that $\sB$ is \emph{regular} in $\sM$ if
\[
W^*(\nor{\sM}(\sB))=\sM,
\]
\emph{groupoid regular} if
\[
W^*(\grnor{\sM}(\sB))=\sM,
\]
and \emph{unitary regular} if
\[
W^*(\unor{\sM}(\sB))=\sM.
\]

Unitary regularity is a classical and well-studied notion. The subject can be traced back to the work of Dixmier \cite{Dix1954}, who investigated unitarily regular subalgebras of a $\II_1$ factor. Building on this work, Feldman and Moore \cite{Feldman_Moore} developed the theory further by establishing a fundamental correspondence between Cartan subalgebras and measured equivalence relations. Inspired by these developments, Kumjian \cite{Kumjian_86} and Renault \cite{Renaultbook} independently introduced regularity in the $C^*$-algebraic setting, laying the foundations for the theory of Cartan subalgebras in $C^*$-algebras. Subsequently, Exel \cite{Exel2011} introduced the notion of generalized Cartan subalgebras and developed a theory of regularity for general noncommutative inclusions. The first author and Silambarasan \cite{Bakshi2026} studied unitary regularity for finite-dimensional $C^*$-algebras and obtained a characterization of unitarily regular inclusions in that setting.
In this direction, the second author and Sumit Kumar \cite{ghosh-kumar-cartan} studied generalized Cartan subalgebras of finite-dimensional von Neumann algebras and obtained a complete characterization of such subalgebras.
A natural question is whether these three notions of regularity are equivalent. For masas in von Neumann algebras, and more generally for abelian subalgebras, it is straightforward to see that they coincide (see \cite[Remark 2.2]{cartan_triples}). Beyond the abelian setting, however, the relationship between the three notions is considerably subtler. In particular, it is natural to ask whether the three notions genuinely distinguish different classes of noncommutative inclusions.

In recent work with Sumit Kumar \cite{BGK2026}, we addressed this question for finite-dimensional von Neumann algebras and completely unraveled the distinctions between regularity, groupoid regularity, and unitary regularity. In particular, we showed that these notions do not coincide in general, even for unital inclusions of finite-dimensional von Neumann algebras. This naturally leads to the broader question of determining precisely when the different notions agree and when they differ.

The main objective of this paper is to systematically determine the relationships among the three notions of regularity beyond finite dimensional case. Since every unitary is a partial isometry and every partial isometry is a normalising element, it is immediate that
\[
\text{unitary regularity}
\ \Longrightarrow\
\text{groupoid regularity}
\ \Longrightarrow\
\text{regularity}.
\]
The first question is whether the second implication can be reversed. More importantly, we ask whether regularity implies unitary regularity.

We first show that regularity and groupoid regularity are, in fact, equivalent for arbitrary unital inclusions of von Neumann algebras. The key observation is that the normalising elements are linearly generated by the groupoid normalisers. More precisely, we prove (in Theorem \ref{thm:spn-nor-equl-gr-nor})
\[
\mathrm{span}\bigl(\nor{\sM}(\sB)\bigr)
=
\mathrm{span}\bigl(\grnor{\sM}(\sB)\bigr).
\]
Consequently,
\[
W^*(\nor{\sM}(\sB))
=
W^*(\grnor{\sM}(\sB)),
\]
and hence $\sB$ is regular in $\sM$ if and only if it is groupoid regular in $\sM$.

We then turn to the relationship between regularity and unitary regularity. In contrast to the preceding result, the two notions can differ substantially for general von Neumann subalgebras. We construct explicit examples showing that regularity does not imply unitary regularity. More precisely, given a finite von Neumann algebra $\sM$ containing a unital copy of $\mn{N}$, we consider block-diagonal subalgebras associated with a decomposition
\[
N=d_1+\cdots+d_k,
\qquad k\geq 2.
\]
We show (in Theorem \ref{thm:block-diagonal-sub-fin-vNa}) that these subalgebras are always regular, whereas they are unitarily regular precisely when
\[
d_1=d_2=\cdots=d_k.
\]
This provides a systematic family of regular but non-unitarily regular subalgebras. As a consequence, every finite von Neumann algebra without an abelian direct summand---in particular, every type $\mathrm{II}_1$ factor---contains a regular subalgebra which is not unitarily regular (see Corollary \ref{cor:exists-reg-not-uni-ii-1-fact}). We further extend this phenomenon to the remaining factor types, showing that every type $\mathrm{I}_\infty$, type $\mathrm{II}_\infty$, (see Theorem \ref{thm:exists-reg-not-uni-semi-finite}) and type $\mathrm{III}$ factor admits (see Proposition \ref{prop:exists-reg-not-uni-type-iii}) a subalgebra for which regularity and unitary regularity differ.

The situation changes dramatically when one restricts attention to subfactors. In the second part of the paper, we show that the failure of regularity to imply unitary regularity described above cannot occur for subfactor inclusions. For a unital inclusion of $\mathrm{II}_1$ factors $\sB\subseteq\sM$, we prove that regularity implies unitary regularity (see Theorem \ref{thm:reg-imply-uni-ii-1-factor}). Together with the equivalence of regularity and groupoid regularity established above, this yields
\[
\text{regularity}
\iff
\text{groupoid regularity}
\iff
\text{unitary regularity}
\]
for $\mathrm{II}_1$ subfactors.

We establish analogous results for subfactors of the other types. In particular, we prove
\[
W^*(\nor{\sM}(\sB))
=
W^*(\unor{\sM}(\sB))
\]
for inclusions of type $\mathrm{III}$ factors (see Proposition \ref{prop:reg-type-iii-subfactor}), for $\sigma$-finite type $\mathrm{II}_\infty$ subfactors (see Theorem \ref{thm:type-ii-infty-case}), and for type $\mathrm{I}_\infty$ subfactors (see Theorem \ref{thm:type-i-infty-reg-equi}). Thus, although regularity and unitary regularity may differ dramatically for general von Neumann subalgebras, they coincide for subfactor inclusions across all the factor types considered here. Combining this with the general equivalence of regularity and groupoid regularity gives a complete coincidence of the three notions in the subfactor setting.

Finally, we investigate the corresponding questions in the $C^*$-algebraic setting. We consider irreducible unital inclusions of simple $C^*$-algebras
\[
\sB\subseteq^E\sA
\]
admitting a conditional expectation $E$ of finite Watatani index. In this setting, we obtain a strong rigidity result: the following conditions are equivalent (see Proposition \ref{prop:reg-iff-cartan-irr}):
\begin{enumerate}
    \item $\sB$ is regular in $\sA$;
    \item $\sB$ is a generalized Cartan subalgebra of $\sA$;
    \item the inclusion is isomorphic to a crossed product inclusion
    $\sB\subseteq \sB\rtimes G$ for some finite group $G$;
    \item $\sB$ is unitarily regular in $\sA$.
\end{enumerate}
Thus, under the above structural assumptions, regularity once again forces unitary regularity, in sharp contrast with the general von Neumann algebraic situation.

The paper is organized as follows. In \S\ref{sec:prelims}, we collect the necessary preliminaries concerning normalisers and partial isometries. We also establish the equivalence between regularity and groupoid regularity. In \S\ref{sec:reg-subfactor}, we construct examples separating regularity from unitary regularity for general von Neumann subalgebras. The equivalence of all three notions for subfactor inclusions is established in \S\ref{subsec:regularity-subfact}. Finally, \S\ref{sec:reg-simple} is devoted to the $C^*$-algebraic setting and proves the corresponding rigidity results for simple $C^*$-algebras.

%% file: sections/prelims.tex
% !TEX root = ../main.tex

In this section, we introduce the three notions of regularity considered in this paper. The first is \emph{unitary regularity}, in the sense of Dixmier \cite{Dix1954}; the second is \emph{groupoid regularity} \cite{Renaultbook}; and the third is \emph{regularity}, in the sense of Renault (see \cite{Renaultbook}, \cite{Renault2008CartanSI}). We begin by recalling the notion of the normaliser of a von Neumann subalgebra, which is needed to define these concepts.

Let $\sB \subset \sM$ be a unital inclusion of von Neumann algebras, where by a unital inclusion we mean that $1_\sB = 1_\sM$.

\begin{definition}
The \emph{normaliser} of $\sB$ in $\sM$ is
\[
\nor{\sM}(\sB)
:=
\{\, n \in \sM \mid n\sB n^* \cup n^*\sB n \subseteq \sB \,\}.
\]
The \emph{groupoid normaliser} of $\sB$ in $\sM$ is
\[
\grnor{\sM}(\sB)
=
\{\, v \in \nor{\sM}(\sB) \mid vv^*v = v \,\},
\]
that is, the set of partial isometries in $\nor{\sM}(\sB)$. The \emph{unitary normaliser} of $\sB$ in $\sM$ is
\[
\unor{\sM}(\sB)
=
\{\, u \in \sU(\sM) \mid u\sB u^* = \sB \,\}.
\]
\end{definition}
Note that $\unor{\sM} (\sB) = \sU(\sM) \cap \nor{\sM}(\sB) \subseteq \grnor{\sM}(\sB)$.

\begin{definition}\label{def:regularity}
Let $\mathcal{B}$ be a von Neumann subalgebra of $\mathcal{M}$. We say that
$\mathcal{B}$ is \emph{regular} in $\mathcal{M}$ if the von Neumann algebra
generated by $\nor{\mathcal{M}}(\mathcal{B})$ coincides with
$\mathcal{M}$; that is,
\[
W^*(\nor{\mathcal{M}}(\mathcal{B}))=\mathcal{M}.
\]

We say that $\mathcal{B}$ is \emph{unitary regular} if
\[
W^*(\unor{\mathcal{M}}(\mathcal{B}))=\mathcal{M},
\]
and \emph{groupoid regular} if
\[
W^*(\grnor{\mathcal{M}}(\mathcal{B}))=\mathcal{M}.
\]
\end{definition}

\begin{lem}\label{lem:spatial-iso-by-gr-nor}
    Let $\sB\subseteq \sM$ be a unital inclusion of von Neumann algebras and $v \in \grnor{\sM}(\sB)$, then its initial projection $e := v^*v$ and final projection $f := vv^*$ belong to $\mathcal{B}$. Furthermore, $v$ implements a spatial isomorphism from $e\mathcal{B}e$ to $f\mathcal{B}f$.
\end{lem}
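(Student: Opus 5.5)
The plan is to use only the defining relations $v\sB v^*\subseteq\sB$, $v^*\sB v\subseteq\sB$, unitality of the inclusion, and $vv^*v=v$. Membership of the projections comes first. Since $1_\sM=1_\sB\in\sB$, we have
\[
f=vv^*=v\,1_\sB\,v^*\in v\sB v^*\subseteq\sB,
\qquad
e=v^*v=v^*\,1_\sB\,v\in v^*\sB v\subseteq\sB .
\]
Because $v$ is a partial isometry, $e$ and $f$ are projections, so they are projections in $\sB$. Consequently $e\sB e$ and $f\sB f$ are corner von Neumann subalgebras of $\sB$, with units $e$ and $f$ respectively.

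Next I define the candidate map $\Phi: e\sB e\to f\sB f$ by $\Phi(x)=vxv^*$. To see that it lands in $f\sB f$, take $x\in e\sB e$. Then $vxv^*\in v\sB v^*\subseteq\sB$. Moreover $fv=vv^*v=v$ and $v^*f=v^*$, which gives $vxv^*=f(vxv^*)f$, so $\Phi(x)\in f\sB f$. In the same way $\Psi(y)=v^*yv$ maps $f\sB f$ into $e\sB e$, using $ev^*=v^*$ and $ve=v$.

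The remaining step is to check that $\Phi$ is a $*$-isomorphism with inverse $\Psi$. Linearity and $*$-preservation are immediate. For multiplicativity, take $x,y\in e\sB e$; then
\[
\Phi(x)\Phi(y)=vxv^*vyv^*=vxeyv^*=vxyv^*,
\]
since $xe=x$. Unitality holds because $\Phi(e)=vv^*vv^*=f$. The inverse property is $\Psi\Phi(x)=v^*vxv^*v=exe=x$, and symmetrically $\Phi\Psi(y)=fyf=y$. Normality follows because both maps are given by multiplication by fixed operators, so they are weak$^*$-continuous. Hence $\Phi$ is an isomorphism of von Neumann algebras.

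It is \emph{spatial} because it is implemented by $v$ in the standard sense. Restrict $v$ to a unitary $v|_{e\mathcal H}: e\mathcal H\to f\mathcal H$. Then $\Phi(x)|_{f\mathcal H}=v|_{e\mathcal H}\,x|_{e\mathcal H}\,(v|_{e\mathcal H})^{-1}$, where $\sM\subseteq B(\mathcal H)$.

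No step is a real obstacle. The only point needing care is the first one: it genuinely uses the unital hypothesis $1_\sB=1_\sM$ to place $e$ and $f$ in $\sB$. Once that is done, the rest is routine manipulation of the partial isometry relations $ve=v=fv$.
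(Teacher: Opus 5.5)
Your proof is correct and follows essentially the same route as the paper: unitality gives $e=v^*1v$ and $f=v1v^*$ in $\sB$, and the map $x\mapsto vxv^*$ sends $e\sB e$ onto $f\sB f$, with inverse $y\mapsto v^*yv$. You also check multiplicativity, normality and the explicit spatial implementation by the unitary $v|_{e\mathcal H}\colon e\mathcal H\to f\mathcal H$, which the paper leaves implicit.
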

\begin{proof}
    Because $\mathcal{B}$ is unital, $1 \in \mathcal{B}$. By the definition of the groupoid normaliser, $v\mathcal{B}v^* \subseteq \mathcal{B}$ and $v^*\mathcal{B}v \subseteq \mathcal{B}$. Therefore:$$v 1 v^* = vv^* = f \in \mathcal{B} \quad \text{and} \quad v^* 1 v = v^*v = e \in \mathcal{B}.$$For any $x \in e\mathcal{B}e$, we have $x = exe$, so $vxv^* = v(exe)v^* = f(vxv^*)f \in f\mathcal{B}f$. To see that this map is onto, let $y \in f\mathcal{B}f$. Since $v^*\mathcal{B}v \subseteq \mathcal{B}$, $v^*yv \in e\mathcal{B}e$. Multiplying by $v$ and $v^*$ gives $v(v^*yv)v^* = fyf = y$. Therefore, $v(e\mathcal{B}e)v^* = f\mathcal{B}f$
\end{proof}

\begin{prop}[{\cite[Remark 2.2]{cartan_triples}}]\label{prop:abel-equi-reg}
    Let $\mathcal{D}$ be an abelian subalgebra of the von Neumann algebra $\sM$. Then 
    \[
    \spn (\unor{\sM} (\sD)) = \spn (\grnor{\sM}(\sD)).
    \]
\end{prop}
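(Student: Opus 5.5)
The inclusion $\unor{\sM}(\sD)\subseteq\grnor{\sM}(\sD)$ is immediate, so I only need to show that every $v\in\grnor{\sM}(\sD)$ is a linear combination of unitaries in $\unor{\sM}(\sD)$. My plan is to extend $v$ to a unitary normaliser $u$ and then cut it down by central-type projections of $\sD$, writing $v = u p$ with $p=v^*v$. Since $v = up$ and $p\in\sD$ is a projection, $2p-1$ is a self-adjoint unitary in $\sD\subseteq\unor{\sM}(\sD)$. Then
\[
v=\tfrac12\bigl(u+u(2p-1)\bigr),
\]
and $u(2p-1)$ is again a unitary normalising $\sD$. So the whole statement reduces to the following extension claim.

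\textbf{Claim:} for every $v\in\grnor{\sM}(\sD)$ there is $u\in\unor{\sM}(\sD)$ with $uv^*v=v$.

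To build $u$, I set $e=v^*v$ and $f=vv^*$, which lie in $\sD$ by Lemma \ref{lem:spatial-iso-by-gr-nor}. The idea is to take $u=v+w$, where $w$ is a partial isometry in $\grnor{\sM}(\sD)$ with $w^*w=1-e$ and $ww^*=1-f$. The relations $w^*w=1-e$ and $ww^*=1-f$ make $u$ unitary. They also kill the cross terms: for $d\in\sD$, abelianness gives $vdw^* = vedw^* = vde(1-e)w^*=0$. Hence $udu^*=vdv^*+wdw^*\in\sD$, and similarly $u^*du\in\sD$. Since $u$ is unitary, $u\sD u^*\subseteq\sD$ together with $u^*\sD u\subseteq\sD$ gives $u\sD u^*=\sD$.

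The main obstacle is producing $w$, because $1-e$ and $1-f$ need not be equivalent through a normaliser, and need not even be Murray--von Neumann equivalent in $\sM$ (think of infinite algebras). I would first try a Schröder--Bernstein / orbit decomposition. By Zorn's lemma, choose a maximal $v'\in\grnor{\sM}(\sD)$ extending $v$, meaning $v'e=v$. Then analyse the complementary projections through the "orbit" of $v$, using the powers $v^n$ and $(v^*)^n$. These are again groupoid normalisers, since products of normalisers normalise: $v^n d (v^n)^*\in\sD$.

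Concretely, I decompose $1$ into the following $\sD$-projections:
\[
\bigvee_n (v^*)^n(1-f)v^n,\qquad \bigvee_n v^n(1-e)(v^*)^n,\qquad \text{and the remainder } r.
\]
On $r$, $v$ already acts unitarily. On the first two pieces, the shift structure allows a "flip" that swaps the wandering subspaces. In particular, the new partial isometry can be chosen to equal $v$ on the parts where it maps the $(1-f)$-orbit pieces onto each other and to be redefined on the rest, in the usual way a partial isometry is replaced by a unitary on $\mathbb{Z}$-orbits. I expect verifying that these pieces are commuting projections in $\sD$ and that the resulting $u$ still normalises $\sD$ to be the delicate part.

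If the extension fails in general (e.g. a unilateral shift), I would instead decompose $v=\sum_n v q_n$. Here the $q_n$ are $\sD$-projections chosen so that each $vq_n$ has initial and final projections that are orthogonal or equal. A piece whose initial and final projections $e',f'$ are orthogonal extends to the self-adjoint unitary $vq_n+(vq_n)^*+(1-e'-f')$, which normalises $\sD$ by the same cross-term calculation. If these pieces sum only strongly, I would settle for equality of weak closures of the spans, which suffices for the regularity consequences.
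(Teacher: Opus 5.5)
The paper gives no argument of its own here (it only quotes the cited remark), so your proposal has to carry the whole proof, and it has a genuine gap. The pieces you do verify are correct. If $v=uq$ with $u\in\unor{\sM}(\sD)$ and $q\in\sD$ a projection, then $v=\tfrac12\bigl(u+u(2q-1)\bigr)$. Your cross-term computation also shows that, for a projection $q\le e=v^*v$ in $\sD$, the element $vq+(1-q)$ (if $vqv^*=q$), or $vq+qv^*+(1-q-vqv^*)$ (if $vqv^*\perp q$), is a unitary normaliser $u$ with $uq=vq$.

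However, the Claim your first strategy reduces to is false, not merely delicate. Take $\sD=\ell^\infty(\mathbb N)\subseteq B(\ell^2(\mathbb N))$ and $v=S$ the unilateral shift. Then $v^*v=1$, so $uv^*v=v$ forces $u=S$, which is not unitary. No Zorn or Schr\"oder--Bernstein construction can rescue this, since $1-e=0$ while $1-f\neq 0$. Everything therefore rests on your fallback, and there the real content is absent. You never say how the projections $q_n$ are chosen, and you allow infinitely many of them, retreating to weak closures. That would give $\wkcl{\spn\unor{\sM}(\sD)}=\wkcl{\spn\grnor{\sM}(\sD)}$. This suffices for the regularity corollary, but it is not the stated equality of linear spans.

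The missing idea is that a \emph{finite} decomposition, into four pieces, always exists.

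\textbf{Fixed part.} Let $\theta(x)=vxv^*$, a normal $*$-isomorphism $e\sD\to f\sD$ with $f=vv^*$, and put $p_0:=e-\bigvee\{q\le e:\ q\theta(q)=0\}$. By construction, no nonzero projection $q\le p_0$ has $q\theta(q)=0$. Applying this to $q(1-\theta(q))$ gives $q\le\theta(q)$ for every projection $q\le p_0$. Applying it next to $\theta^{-1}(\theta(p_0)-p_0)$ gives $\theta(p_0)=p_0$.

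\textbf{Moved part.} Every nonzero subprojection of $e-p_0$ dominates a nonzero $q$ with $q\theta(q)=0$. By Zorn (the condition passes to suprema of chains because $\theta$ is normal), choose a maximal $q_1\le e-p_0$ with $q_1\theta(q_1)=0$. Maximality forces $e-p_0\le q_1\vee\theta(q_1)\vee\theta^{-1}(q_1f)$. Hence $q_2:=(e-p_0-q_1)\theta(q_1)$ and $q_3:=e-p_0-q_1-q_2\le\theta^{-1}(q_1f)$ also satisfy $q_i\theta(q_i)=0$.

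\textbf{Conclusion.} Now $v=vp_0+vq_1+vq_2+vq_3$. Each summand has the form $u_iq_i$ with $u_i$ one of your two explicit unitary normalisers, so $v$ is a linear combination of eight elements of $\unor{\sM}(\sD)$. This Frol\'ik-type decomposition of a partial automorphism of an abelian von Neumann algebra, into a fixed part plus three pieces each moved off itself, is exactly the step your proposal leaves open.
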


\begin{prop}\label{prop:polar-normaliser-vna}
    Let $\sB\subset \sM$ be a unital inclusion of von Neumann algebras. If $x\in \nor{\sM}(\sB)$ has polar decomposition
\[
x=u|x|,
\]
then $u\in \grnor{\sM}(\sB)$ and $|x|\in \sB$.
\end{prop}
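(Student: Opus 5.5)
First I show that $|x|\in\sB$. Since $x\in\nor{\sM}(\sB)$ and $1\in\sB$, we get $x^*x = x^*1x\in\sB$. Because $\sB$ is a von Neumann algebra, it is closed under continuous functional calculus of its positive elements, so $|x|=(x^*x)^{1/2}\in\sB$. By the same argument applied to $xx^*=x1x^*$, also $|x^*|=(xx^*)^{1/2}\in\sB$. Let $e=s(|x|)$ and $f=s(|x^*|)$ denote the support projections. These are strong limits of $(|x|)^{1/n}$ and $(|x^*|)^{1/n}$ respectively, so $e,f\in\sB$. From the polar decomposition, $u^*u=e$, $uu^*=f$, and $u$ is a partial isometry lying in $\sM$.

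The main step is to show that $u\sB u^*\subseteq\sB$; the inclusion $u^*\sB u\subseteq\sB$ then follows by the symmetric argument, using $x^*=u^*|x^*|$. The natural approximation is $u=\lim_{\varepsilon\to 0} x(|x|+\varepsilon)^{-1}$ in the strong operator topology. For $b\in\sB$ we then have
\[
u b u^* = \lim_{\varepsilon\to0} x\,(|x|+\varepsilon)^{-1} b\,(|x|+\varepsilon)^{-1} x^*,
\]
where the limit is taken weakly. Since $(|x|+\varepsilon)^{-1}\in\sB$, the middle factor $c_\varepsilon:=(|x|+\varepsilon)^{-1}b(|x|+\varepsilon)^{-1}$ lies in $\sB$. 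Hence $xc_\varepsilon x^*\in x\sB x^*\subseteq\sB$, and because $\sB$ is weakly closed, the limit $ubu^*$ lies in $\sB$.

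The point needing care is the convergence of this limit. Write $g_\varepsilon(t)=t(t+\varepsilon)^{-1}$. Then
\[
x(|x|+\varepsilon)^{-1}=u\,|x|(|x|+\varepsilon)^{-1}=u\,g_\varepsilon(|x|),
\]
and $g_\varepsilon(|x|)\to e$ strongly as $\varepsilon\to0$, with $\|g_\varepsilon(|x|)\|\le 1$. Consequently,
\[
xc_\varepsilon x^* = u\,g_\varepsilon(|x|)\,b\,g_\varepsilon(|x|)\,u^*.
\]
Products of uniformly bounded, strongly convergent nets converge strongly, so this expression tends to $u\,e\,b\,e\,u^*=ubu^*$, using $ue=u$. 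This settles the convergence without further issue.

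Applying the same argument to $x^*$ shows $u^*\sB u\subseteq\sB$. Combining the two inclusions, $u\in\nor{\sM}(\sB)$. Since $u$ is a partial isometry, it follows that $u\in\grnor{\sM}(\sB)$.
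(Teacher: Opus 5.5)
Your proposal is correct and follows essentially the paper's route: you approximate $u$ by $x\,h_\varepsilon(|x|)$ with $h_\varepsilon(|x|)\in\mathcal{B}$, so that $u\,g_\varepsilon(|x|)\,b\,g_\varepsilon(|x|)\,u^*\in\mathcal{B}$, and then pass to the limit using bounded strong convergence to the support projection and weak closedness of $\mathcal{B}$. The only difference is that you use $(|x|+\varepsilon)^{-1}$ where the paper uses the spectral cut-off $f_\epsilon(|x|)$ with $x f_\epsilon(|x|)=uP_\epsilon$, and this is inessential.
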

\begin{proof}
    If $x=0$ nothing to prove. Suppose that $x\neq 0$. Clearly, $x^*1_{\sM}x\in \sB$, so $|x|\in \sB$. Let $E_{|x|}$ be the spectral measure of $|x|$. For $0<\epsilon < \|x\|$, let $f_{\epsilon}(t)= \frac{1}{t}\ \chi_{[\epsilon, \infty]}(t)$ and $P_{\epsilon}:= E_{|x|}([\epsilon, \|x\|])$. Then, $|x|f_{\epsilon}(|x|)= P_{\epsilon}$. So, $xf_{\epsilon}(|x|)= u|x| f_{\epsilon}(|x|)= u P_{\epsilon}$. Note that, $$uP_{\epsilon}\to u \,\, \,\text{as} \,\,\,\epsilon \to 0$$ in $\sigma$-strong$*$ topology. As, $|x|\in \sB$ this implies that $f_{\epsilon}(|x|)\in \sB$. Hence $xf_{\epsilon}(|x|)\in \nor{\sM}(\sB)$. Also, note that $\|xf_{\epsilon}(|x|)\|= \|uP_{\epsilon}\|\leq 1.$ We have for any $n\in \sB$
    \[
    uP_{\epsilon}n (uP_{\epsilon})^* \to unu^*
    \]
    as $\epsilon \to 0$ in $\sigma$-strong topology. Since $uP_{\epsilon}n(uP_{\epsilon})^*\in \sB$ for all $n\in \sB$. This implies that $unu^*\in \sB$ for all $n\in \sB$. Similarly, $u^*nu\in \sB$ for all $n\in \sB$. Thus $u\in \grnor{\sM}(\sB)$.
\end{proof}

\begin{theorem}\label{thm:spn-nor-equl-gr-nor}
    Let $\sB\subseteq \sM$ be a unital inclusion of von Neumann algebras. Then 
    $$\spn \left( \nor{\sM}(\sB) \right)= \spn \left( \grnor{\sM}( \sB) \right).$$
    Consequently, $\sB$ is regular in $\sM$ if and only if it is groupoid regular in $\sM$.
\end{theorem}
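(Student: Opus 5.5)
The inclusion $\spn(\grnor{\sM}(\sB))\subseteq\spn(\nor{\sM}(\sB))$ is immediate, since $\grnor{\sM}(\sB)\subseteq\nor{\sM}(\sB)$. For the reverse inclusion it suffices to show that every single $x\in\nor{\sM}(\sB)$ is a linear combination of elements of $\grnor{\sM}(\sB)$. By Proposition \ref{prop:polar-normaliser-vna}, write $x=u|x|$ with $u\in\grnor{\sM}(\sB)$ and $|x|\in\sB$. The plan is to split $|x|$ into elements of $\sB$ that combine with $u$ to give partial isometry normalisers.

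First observe that $\grnor{\sM}(\sB)$ is closed under multiplication on the right by unitaries of $\sB$. If $w\in\sU(\sB)$, then $uw$ is a partial isometry, since $(uw)(uw)^*(uw)=uww^*u^*uw=uu^*uw=uw$. It also normalises $\sB$:
\[
uw\,\sB\,w^*u^*=u\sB u^*\subseteq\sB,\qquad w^*u^*\,\sB\,uw\subseteq w^*\sB w=\sB.
\]
Next, write the positive element $|x|\in\sB$ as a combination of unitaries of $\sB$. Rescale to $a=|x|/\|x\|$ with $0\le a\le 1$ (the case $x=0$ is trivial). Then
\[
a=\tfrac12(w+w^*),\qquad w=a+i\sqrt{1-a^2}\in\sU(\sB).
\]
This gives
\[
x=\tfrac{\|x\|}{2}\bigl(uw+uw^*\bigr),
\]
and both terms lie in $\grnor{\sM}(\sB)$ by the first observation. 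This proves $\spn(\nor{\sM}(\sB))=\spn(\grnor{\sM}(\sB))$.

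For the consequence, two sets with the same linear span generate the same $*$-algebra, and hence the same von Neumann algebra. So $W^*(\nor{\sM}(\sB))=W^*(\grnor{\sM}(\sB))$, and regularity is equivalent to groupoid regularity.

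The only substantive step is the polar decomposition fact, Proposition \ref{prop:polar-normaliser-vna}, which is already established. The remaining point to check is that $uw$ is still a partial isometry even though $w$ need not commute with $u^*u$; the computation above shows this holds, because $w$ is unitary and so $ww^*=1$.
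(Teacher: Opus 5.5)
Your proof is correct and follows the paper's route. You use the polar decomposition from Proposition \ref{prop:polar-normaliser-vna}, write $|x|$ as a combination of unitaries of $\sB$, and absorb those unitaries into the partial isometry. Your explicit decomposition $a=\tfrac12(w+w^*)$ and your check that $uw\in\grnor{\sM}(\sB)$ for every $w\in\sU(\sB)$ also fill in a step the paper leaves implicit.
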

\begin{proof}
    Let $x\in \nor{\sM}( \sB)$ be arbitrary. By Proposition \ref{prop:polar-normaliser-vna} $|x| \in \sB$. Since $\sB$ is the span of its unitaries, there are unitaries $u_1, \dots, u_r \in \sU(\sB)$ and scalars $\lambda_1, \dots, \lambda_r\in \C$ such that $|x| = \sum_i \lambda_i u_i$. Suppose the partial isometry in the polar decomposition of $x$ is $v$ which is in $\grnor{\sM}(\sB)$ by Proposition \ref{prop:polar-normaliser-vna}. Therefore we get, 
    \[
    x = v |x| = \sum_i \lambda_i (vu_i) \in \spn \left(\grnor{\sM}( \sB) \right).
    \]
    Thus $\nor{\sM}(\sB)  \subseteq \spn \left(\grnor{\sM}(\sB) \right)$ which implies $\spn \left( \nor{\sM}(\sB) \right) \subseteq \spn \left( \grnor{\sM}(\sB) \right).$

    On the other hand, since $\grnor{\sM} (\sB)\subseteq \nor{\sM}(\sB)$ we get $\spn \left( \grnor{\sM}( \sB) \right) \subseteq \spn \left( \nor{\sM}(\sB)\right).$
    Therefore, $$\spn \left( \nor{\sM}(\sB) \right)= \spn \left( \grnor{\sM}( \sB) \right).$$

    The last part follows from the observation:
    \[
    W^* \left( \nor{\sM}(\sB)\right)= \wkcl{\spn \left( \nor{\sM}(\sB) \right)} = \wkcl{\spn \left( \grnor{\sM}(\sB) \right)} = W^* \left( \grnor{\sM}(\sB)\right).
    \]
\end{proof}

Using Proposition \ref{prop:abel-equi-reg} and Theorem \ref{prop:polar-normaliser-vna} we get:
\begin{cor}\label{abelian case}
Let $\mathcal{D} \subseteq \mathcal{M}$ be a unital inclusion of von Neumann algebras, where $\mathcal{D}$ is abelian. Then the following statements are equivalent:
\begin{enumerate}
    \item[(i)] $\mathcal{D}$ is regular in $\mathcal{M}$.
    \item[(ii)] $\mathcal{D}$ is groupoid regular in $\mathcal{M}$.
    \item[(iii)] $\mathcal{D}$ is unitary regular in $\mathcal{M}$.
\end{enumerate}
\end{cor}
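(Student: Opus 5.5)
The plan is to prove the cycle of implications (iii)$\Rightarrow$(ii)$\Rightarrow$(i)$\Rightarrow$(iii). The first two are immediate from the inclusions
\[
\unor{\mathcal M}(\mathcal D)\subseteq \grnor{\mathcal M}(\mathcal D)\subseteq \nor{\mathcal M}(\mathcal D),
\]
since enlarging a generating set can only enlarge the generated von Neumann algebra. Concretely,
\[
W^*(\unor{\mathcal M}(\mathcal D))\subseteq W^*(\grnor{\mathcal M}(\mathcal D))\subseteq W^*(\nor{\mathcal M}(\mathcal D))\subseteq \mathcal M,
\]
so if the smallest of these equals $\mathcal M$, so do the larger ones. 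Moreover, (i)$\Leftrightarrow$(ii) already holds without any commutativity hypothesis by Theorem \ref{thm:spn-nor-equl-gr-nor}.

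The only implication requiring input is (ii)$\Rightarrow$(iii), or equivalently (i)$\Rightarrow$(iii). For this I would chain the two span equalities available. Theorem \ref{thm:spn-nor-equl-gr-nor} gives $\spn(\nor{\mathcal M}(\mathcal D))=\spn(\grnor{\mathcal M}(\mathcal D))$, and Proposition \ref{prop:abel-equi-reg}, which uses that $\mathcal D$ is abelian, gives $\spn(\grnor{\mathcal M}(\mathcal D))=\spn(\unor{\mathcal M}(\mathcal D))$. Hence all three spans coincide.

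Each normaliser set is closed under adjoints, and unitary normalisers form a group. The weak operator closure of each span is therefore the von Neumann algebra generated by the corresponding set, exactly as in the last display of the proof of Theorem \ref{thm:spn-nor-equl-gr-nor}. (For the unitary normaliser this needs a little care: the span of a group of unitaries is a unital $*$-algebra, so von Neumann's bicommutant theorem applies. For the other two sets, the display in the proof of Theorem \ref{thm:spn-nor-equl-gr-nor} is simply taken as given.) Taking weak operator closures of the equal spans yields
\[
W^*(\nor{\mathcal M}(\mathcal D))=W^*(\grnor{\mathcal M}(\mathcal D))=W^*(\unor{\mathcal M}(\mathcal D)).
\]
Thus if any one of these equals $\mathcal M$, all three do, which gives the equivalence of (i), (ii) and (iii).

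There is no real obstacle, since the substantive content is in the cited Proposition \ref{prop:abel-equi-reg}. If I had to justify it myself, the key step would be writing each partial isometry normaliser $v$, with $e=v^*v$ and $f=vv^*$ in $\mathcal D$, as a combination of unitary normalisers. Because $\mathcal D$ is abelian, $e$ and $f$ commute with all of $\mathcal D$. One would then extend $v$ to a unitary normaliser $u$ with $ue=v$, which requires a partial isometry normaliser from $1-e$ onto $1-f$; this is where a maximality or exhaustion argument over $\mathcal D$ would be needed.
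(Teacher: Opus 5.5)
Your argument is correct and is exactly the paper's: you combine Theorem~\ref{thm:spn-nor-equl-gr-nor} with Proposition~\ref{prop:abel-equi-reg} to get equality of the three spans, and hence of the three generated von Neumann algebras. One caution about your closing aside, which the corollary does not need: a groupoid normaliser $v$ cannot in general be extended to a unitary normaliser $u$ with $ue=v$. For instance, the unilateral shift normalises the diagonal masa of $B(\ell^2(\mathbb{N}))$ but is a non-unitary isometry. A proof of the cited proposition must instead cut $v$ into finitely many pieces $vp_i$, with $p_i\in\mathcal D$ projections, each of which does extend to a unitary normaliser, and then use $up=\tfrac12\bigl(u+u(2p-1)\bigr)$.
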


\begin{lem}\label{lem:uni_auto}
Let $\mathcal{B} \subseteq \mathcal{M}$ be a unital inclusion of von Neumann algebras. If $u \in \unor{\sM}(\sB)$, then conjugation by $u$ restricts to an automorphism of the center $\mathcal{Z}(\mathcal{B})$.
\end{lem}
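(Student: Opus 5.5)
The plan is to show that the map $\alpha_u\colon x\mapsto uxu^*$ takes $\sZ(\sB)$ onto $\sZ(\sB)$ and is a $*$-automorphism there; nothing beyond the definition of $\unor{\sM}(\sB)$ is needed. First, since $u\in\unor{\sM}(\sB)$ means $u\sB u^*=\sB$, conjugating by $u^*$ gives $u^*\sB u=\sB$ as well. Hence $\alpha_u$ restricts to a $*$-automorphism of $\sB$, with inverse $\alpha_{u^*}$. It is clearly multiplicative, $*$-preserving, unital and isometric, and it is also normal, being spatial.

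Next I check that $\alpha_u$ maps $\sZ(\sB)$ into itself. Take $z\in\sZ(\sB)$ and $b\in\sB$. Then $u^*bu\in\sB$, so $z$ commutes with it, and therefore
\[
(uzu^*)b=uz(u^*bu)u^*=u(u^*bu)zu^*=b(uzu^*).
\]
Since also $uzu^*\in\sB$, this shows $uzu^*\in\sZ(\sB)$. Running the same argument with $u^*$ in place of $u$, which is legitimate because $u^*\in\unor{\sM}(\sB)$ by the first step, gives $u^*\sZ(\sB)u\subseteq\sZ(\sB)$.

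Finally, these two inclusions combine to give $\sZ(\sB)=u(u^*\sZ(\sB)u)u^*\subseteq u\sZ(\sB)u^*\subseteq \sZ(\sB)$. So $\alpha_u|_{\sZ(\sB)}$ is a bijective $*$-homomorphism of $\sZ(\sB)$ onto itself, with inverse $\alpha_{u^*}|_{\sZ(\sB)}$.

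There is no real obstacle here. The only point that needs care is using the equality $u\sB u^*=\sB$, and not just an inclusion, to get $u^*\sB u\subseteq\sB$; this is what makes both the commutation argument and the surjectivity work.
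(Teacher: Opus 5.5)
Your proposal is correct and follows essentially the same argument as the paper: write $b=u(u^*bu)u^*$ with $u^*bu\in\sB$ and commute $z$ past it to get $uzu^*\in\sZ(\sB)$. You also explicitly verify two points the paper leaves implicit, namely that $uzu^*\in\sB$ and the reverse inclusion obtained by applying the argument to $u^*$, which gives the equality $u\sZ(\sB)u^*=\sZ(\sB)$.
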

\begin{proof}
    By definition, $u\mathcal{B}u^* = \mathcal{B}$. If $z \in \mathcal{Z}(\mathcal{B})$, then for any $b \in \mathcal{B}$, there exists $b' \in \mathcal{B}$ such that $b = u b' u^*$. Therefore, $(uzu^*)b = uzu^*ub'u^* = u(zb')u^* = u(b'z)u^* = ub'u^*uzu^* = b(uzu^*)$. Thus, $uzu^*$ commutes with all of $\mathcal{B}$, meaning $u\mathcal{Z}(\mathcal{B})u^* = \mathcal{Z}(\mathcal{B})$.
\end{proof}

%% file: sections/block-diagonal-subalg.tex
Let $\mathcal{M}$ be a finite von Neumann algebra. By definition, $\mathcal{M}$ possesses a unique, faithful, normal center-valued trace $T: \mathcal{M} \to \mathcal{Z}(\mathcal{M})$. This map satisfies $T(xy) = T(yx)$ for all $x,y \in \mathcal{M}$, and $T(z) = z$ for all $z \in \mathcal{Z}(\mathcal{M})$.

\begin{lem}\label{lem:tr-of-mat-unit}
    Suppose $\mathcal{M}$ contains a unital $C^*$-subalgebra isomorphic to $\mn{N}$ with standard matrix units $\{e_{ij}\}_{i,j=1}^N$ satisfying $\sum_{i=1}^N e_{ii} = 1$. Let $\mathcal{N} = \mathcal{M} \cap \{e_{ij}\}'$ be the relative commutant. Then there is a canonical spatial isomorphism $\mathcal{M} \cong \mn{N}[\mathcal{N}]$. Furthermore, the center-valued trace of each diagonal matrix unit is precisely:$$T(e_{ii}) = \frac{1}{N} 1_{\mathcal{Z}(\mathcal{M})}$$
\end{lem}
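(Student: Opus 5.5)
The proof has two independent parts: the matrix decomposition of $\mathcal{M}$ and the computation of $T(e_{ii})$.

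\emph{Matrix decomposition.} Define
\[
\Phi:\mathcal{M}\to \mn{N}[\mathcal{N}],\qquad \Phi(x)=\bigl(x_{ij}\bigr)_{i,j},\quad x_{ij}:=\sum_{k=1}^N e_{ki}\,x\,e_{jk}.
\]
First I check that each $x_{ij}$ lies in $\mathcal{N}$. For any matrix unit $e_{pq}$ we have $e_{pq}x_{ij}=e_{pi}xe_{jq}=x_{ij}e_{pq}$, so $x_{ij}$ commutes with all the $e_{pq}$.

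Next I reconstruct $x$ from its entries. Using $\sum_i e_{ii}=1$,
\[
\sum_{i,j} e_{i1}\,x_{ij}\,e_{1j}=\sum_{i,j} e_{ii}\,x\,e_{jj}=x.
\]
Conversely, any matrix $(y_{ij})$ with entries in $\mathcal{N}$ is the image of $\sum_{i,j} e_{i1}y_{ij}e_{1j}$. This last element is just $\sum_{i,j} y_{ij}e_{ij}$, because each $y_{ij}$ commutes with the matrix units.

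Multiplicativity and $*$-preservation then follow from $e_{ij}e_{kl}=\delta_{jk}e_{il}$ together with the commutation. So $\Phi$ is a $*$-isomorphism. It is spatial: on $H$ we have $H\cong \C^N\otimes e_{11}H$ via the partial isometries $e_{i1}$, and this unitary implements $\Phi$. The relative commutant $\mathcal{N}$ is identified with $e_{11}\mathcal{M}e_{11}$ acting on $e_{11}H$. This part is routine.

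\emph{The trace computation.} For each $i$ we have $e_{ii}=e_{i1}e_{1i}$ and $e_{11}=e_{1i}e_{i1}$. Applying $T(xy)=T(yx)$ gives $T(e_{ii})=T(e_{11})$ for all $i$. Summing over $i$ and using $\sum_i e_{ii}=1$ together with $T(1)=1$ (since $1$ is central),
\[
N\,T(e_{11})=T(1)=1,
\]
so $T(e_{ii})=\tfrac1N 1_{\mathcal{Z}(\mathcal{M})}$.

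The step needing the most care is the bookkeeping in the isomorphism, namely checking the product formula $\Phi(xy)=\Phi(x)\Phi(y)$. It reduces to
\[
\sum_l x_{il}\,y_{lj}=\sum_{k,l} e_{ki}\,x\,e_{lk}\,e_{kl}\,y\,e_{jk}.
\]
The terms with mismatched indices vanish, and $\sum_l e_{ll}=1$ collapses the remaining sum to $(xy)_{ij}$. The trace identity itself is immediate from the defining properties of $T$ and does not depend on the isomorphism.
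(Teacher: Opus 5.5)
Your proof is correct and takes the same route as the paper. The trace computation is identical: cyclicity of $T$ gives $T(e_{ii})=T(e_{11})$, and summing with $T(1)=1$ finishes it. The isomorphism is the standard matrix-unit decomposition, and your entries $x_{ij}=\sum_k e_{ki}xe_{jk}$ are actually a small improvement. They genuinely lie in $\mathcal{N}=\mathcal{M}\cap\{e_{ij}\}'$, whereas the paper's coefficients $e_{1i}xe_{j1}$ lie in the corner $e_{11}\mathcal{M}e_{11}$, which is only isomorphic to $\mathcal{N}$ via $y\mapsto\sum_k e_{k1}ye_{1k}$.
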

\begin{proof}
    The isomorphism $\mathcal{M} \cong \mn{N}[\mathcal{N}]$ is a standard fact, given by the map $x \mapsto \sum_{i,j=1}^N (e_{1i} x e_{j1}) \otimes E_{ij}$, where the coefficients $e_{1i} x e_{j1}$ belong to $\mathcal{N}$.
For the trace, since $e_{ij} e_{ji} = e_{ii}$ and $e_{ji} e_{ij} = e_{jj}$, the trace property yields $T(e_{ii}) = T(e_{ij}e_{ji}) = T(e_{ji}e_{ij}) = T(e_{jj})$.
Because $1 = \sum_{i=1}^N e_{ii}$ and $T$ is linear and trace-preserving, $1_{\mathcal{Z}(\mathcal{M})} = T(1) = \sum_{i=1}^N T(e_{ii}) = N \cdot T(e_{11})$. Thus, $T(e_{ii}) = \frac{1}{N} 1_{\mathcal{Z}(\mathcal{M})}$.
\end{proof}
\begin{lem}\label{lem:uni-nor-conj}
    For any unitary $u \in \mathcal{U}(\mathcal{M})$ and any $x \in \mathcal{M}$, $T(uxu^*) = T(x)$. Consequently, if $u \in \unor{\sM}(\sB)$, then $u$ acts as an automorphism of the center $\mathcal{Z}(\mathcal{B})$ that preserves the center-valued trace of its projections.
\end{lem}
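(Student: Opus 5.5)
The first assertion follows directly from the defining properties of the center-valued trace $T$ on the finite von Neumann algebra $\mathcal{M}$. Given $u\in\sU(\mathcal{M})$ and $x\in\mathcal{M}$, set $a:=ux$ and $b:=u^*$. The trace property $T(ab)=T(ba)$ then gives
\[
T(uxu^*) = T\bigl((ux)u^*\bigr) = T\bigl(u^*(ux)\bigr) = T(x),
\]
since $u^*u=1$. The argument uses only that $T(xy)=T(yx)$ for all $x,y\in\mathcal{M}$, which is part of the definition recalled at the start of this section.

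For the second assertion, fix $u\in\unor{\sM}(\sB)$. Lemma \ref{lem:uni_auto} shows that $\alpha_u(z):=uzu^*$ maps $\mathcal{Z}(\mathcal{B})$ onto itself, so $\alpha_u$ is a bijection of $\mathcal{Z}(\mathcal{B})$. It is a $*$-homomorphism because conjugation by a unitary is multiplicative and $*$-preserving, and its inverse is conjugation by $u^*$. The inverse also preserves $\mathcal{Z}(\mathcal{B})$, because $u^*\mathcal{B}u=\mathcal{B}$ follows from $u\mathcal{B}u^*=\mathcal{B}$. Hence $\alpha_u$ is a $*$-automorphism of $\mathcal{Z}(\mathcal{B})$.

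In particular $\alpha_u$ sends projections of $\mathcal{Z}(\mathcal{B})$ to projections of $\mathcal{Z}(\mathcal{B})$. Applying the first part with $x=p$, for any projection $p\in\mathcal{Z}(\mathcal{B})$, gives $T(upu^*)=T(p)$, which is the claimed preservation of the center-valued trace.

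There is no real obstacle here. The only point to be careful about is that the trace identity is applied to arbitrary elements $ux$ and $u^*$ of $\mathcal{M}$, not only to projections, and this is allowed because the trace property of $T$ holds on all of $\mathcal{M}$. The lemma matters for later use: together with Lemma \ref{lem:tr-of-mat-unit} it will obstruct unitary regularity when the block sizes $d_i$ differ, since central projections of $\mathcal{B}$ with different traces cannot be interchanged by unitary normalisers.
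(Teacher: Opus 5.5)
Your proposal is correct and follows essentially the same route as the paper: the trace identity $T\bigl((ux)u^*\bigr)=T\bigl(u^*(ux)\bigr)=T(x)$, followed by Lemma \ref{lem:uni_auto} to see that conjugation by $u$ maps the projections of $\mathcal{Z}(\mathcal{B})$ to projections of $\mathcal{Z}(\mathcal{B})$ and therefore preserves their center-valued traces. You also check explicitly that conjugation by $u^*$ preserves $\mathcal{Z}(\mathcal{B})$, which supplies the surjectivity that Lemma \ref{lem:uni_auto} asserts but does not spell out.
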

\begin{proof}
    By the properties of the center-valued trace, $T(uxu^*) = T(u^*ux) = T(1x) = T(x)$. If $u$ normalises $\mathcal{B}$, $u\mathcal{B}u^* = \mathcal{B}$, so $u$ permutes the central projections of $\mathcal{B}$. If $E \in \mathcal{Z}(\mathcal{B})$, then $T(uEu^*) = T(E)$.
\end{proof}

\begin{lem}\label{lem:mn-subalg}
    Let $\mathcal{M}$ be a $\II_1$ factor. For any integer $n \geq 1$, there exists a system of matrix units $\{v_{ij}\}_{i,j=1}^n \subset \mathcal{M}$ such that $\sum_{i=1}^n v_{ii} = 1$. Furthermore, if $\mathcal{N} = \mathcal{M} \cap \{v_{ij}\}'$ is the relative commutant, then $\mathcal{N}$ is a $\II_1$ factor, $\mathcal{N} \cong v_{11}\mathcal{M}v_{11}$, and we have a canonical spatial isomorphism $\mathcal{M} \cong \mn{n}[\sN]$.
\end{lem}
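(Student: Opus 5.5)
The proof splits into two parts: first produce the matrix units using the structure of projections in a $\II_1$ factor, then deduce the statements about $\sN$ from Lemma \ref{lem:tr-of-mat-unit} and the theory of reduced algebras.

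\emph{Construction of the matrix units.} Let $\tau$ be the faithful normal tracial state on $\sM$. Since $\sM$ is a $\II_1$ factor, $\tau$ takes every value in $[0,1]$ on projections. Moreover, two projections are Murray--von Neumann equivalent precisely when they have equal trace, because comparison of projections is governed by the trace in a factor.

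First, use the diffuseness of $\sM$ (a halving argument) to choose mutually orthogonal projections $p_1,\dots,p_n$ with $\tau(p_i)=1/n$ and $\sum_i p_i=1$. Concretely, choose $p_1$ with trace $1/n$, then choose $p_2\le 1-p_1$ inside the $\II_1$ factor $(1-p_1)\sM(1-p_1)$, and continue inductively. The last projection is then $p_n = 1-\sum_{i<n}p_i$.

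Since all the $p_i$ have equal trace, for each $i$ there is a partial isometry $w_i\in\sM$ with $w_i^*w_i=p_1$ and $w_iw_i^*=p_i$; take $w_1=p_1$. Set $v_{ij}:=w_iw_j^*$. The identities
\[
v_{ij}v_{kl}=\delta_{jk}v_{il}, \qquad v_{ij}^*=v_{ji}, \qquad v_{ii}=p_i
\]
follow by routine checking from $w_j^*w_k=\delta_{jk}p_1$. The vanishing for $j\neq k$ holds because $w_j^*w_k=w_j^*p_jp_kw_k=0$. Hence $\sum_i v_{ii}=1$, as required.

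\emph{The relative commutant.} Lemma \ref{lem:tr-of-mat-unit} now gives the canonical isomorphism $\sM\cong\mn{n}[\sN]$. To identify $\sN$ with $v_{11}\sM v_{11}$, use the map $\phi:\sN\to v_{11}\sM v_{11}$, $\phi(x)=xv_{11}$.
\begin{itemize}
\item $\phi$ is a $*$-homomorphism, since $v_{11}$ commutes with $\sN$.
\item $\phi$ is injective: if $xv_{11}=0$, then $xv_{ii}=v_{i1}xv_{11}v_{1i}=0$ for every $i$, and summing over $i$ gives $x=0$.
\item $\phi$ is surjective: given $y\in v_{11}\sM v_{11}$, the element $x:=\sum_i v_{i1}yv_{1i}$ commutes with every $v_{kl}$ and satisfies $xv_{11}=y$.
\end{itemize}

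\emph{$\sN$ is a $\II_1$ factor.} A corner $p\sM p$ of a $\II_1$ factor by a nonzero projection is again a $\II_1$ factor, so $v_{11}\sM v_{11}$, and hence $\sN$, is one. One can also see factoriality directly: $\mathcal{Z}(\sN)\otimes 1\subseteq\mathcal{Z}(\mn{n}[\sN])=\mathcal{Z}(\sM)=\C$. Finiteness and infinite-dimensionality pass to the corner (the restricted trace is faithful and normal, and a finite-dimensional corner would force $\sM$ to be finite-dimensional), so $\sN$ is of type $\II_1$.

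\emph{Main obstacle.} The only nontrivial input is the existence of the projections $p_i$ with trace $1/n$ together with the equivalence of equal-trace projections. Both are standard facts about $\II_1$ factors, which I would cite rather than reprove.
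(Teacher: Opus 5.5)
Your proposal is correct and takes the same route as the paper's sketch: choose $n$ orthogonal projections of trace $1/n$, use that equal-trace projections are equivalent in a $\II_1$ factor to build matrix units, and deduce $\sM\cong\mn{n}[\sN]$. Your version is more complete than the sketch in three places: the coherent choice $v_{ij}=w_iw_j^*$ guarantees $v_{ij}v_{jk}=v_{ik}$, which arbitrary partial isometries between $p_j$ and $p_i$ need not satisfy; you prove $\sN\cong v_{11}\sM v_{11}$; and you show $\sN$ is a $\II_1$ factor, which the paper asserts without argument.
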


\begin{proof}[Sketch of Proof]
    Since $\mathcal{M}$ is a $\II_1$ factor, its trace $\tau$ maps projections onto the continuous interval $[0,1]$. One can choose $n$ mutually orthogonal projections $p_i$ such that $\tau(p_i) = 1/n$. Because projections of equal trace are equivalent in a factor, there exist partial isometries $v_{ij}$ such that $v_{ij}^* v_{ij} = p_j$ and $v_{ij} v_{ij}^* = p_i$. These form the required matrix units, and the isomorphism $\mathcal{M} \cong \mn{n}[\sN]$ follows via the mapping $x \mapsto \sum_{i,j} v_{i1} x v_{1j} \otimes e_{ij}$.
\end{proof}

\begin{example}  \label{ex:ii-1-case}
Let $\mathcal{M}$ be any $\II_1$ factor. By Lemma \ref{lem:mn-subalg} with $n=3$, we can fix a system of $3 \times 3$ matrix units $\{v_{ij}\}_{i,j=1}^3$ and identify:
$$\mathcal{M} \cong \mn{3}[\sN]$$
where $\mathcal{N} = \mathcal{M} \cap \{v_{ij}\}'$ is the relative commutant. The unique trace on $\mathcal{M}$ takes the form $\tau(X) = \frac{1}{3} \operatorname{tr}(\tau_{\mathcal{N}}(X_{ij}))$. Define the subalgebra $\mathcal{B} \subseteq \mn{3}[\sN]$ as the algebra of block-diagonal matrices of the form:
$$\mathcal{B} = \left\{ \begin{pmatrix} x & 0 & 0 \\ 0 & y_{11} & y_{12} \\ 0 & y_{21} & y_{22} \end{pmatrix} : x, y_{ij} \in \mathcal{N} \right\}$$Note that $\mathcal{B} \cong \mathcal{N} \oplus \mn{2}[\sN]$.

\noindent\textbf{Claim.} $\mathcal{B}$ is regular in $\mathcal{M}$; that is, $W^*(\nor{\sM}(\sB)) = \mathcal{M}$.
\begin{proof}
    It suffices to exhibit a sufficient number of partial isometries in $\nor{\sM}(\sB)$ that, along with $\mathcal{B}$, generate the full matrix algebra $\mn{3}[\sN]$. Define the self-adjoint partial isometry $V \in M_3(\mathbb{C}) \subseteq \mathcal{M}$ by:$$V = \begin{pmatrix} 0 & 1 & 0 \\ 1 & 0 & 0 \\ 0 & 0 & 0 \end{pmatrix}$$Let $b \in \mathcal{B}$. Direct multiplication yields:$$V b V^* = \begin{pmatrix} 0 & 1 & 0 \\ 1 & 0 & 0 \\ 0 & 0 & 0 \end{pmatrix} \begin{pmatrix} x & 0 & 0 \\ 0 & y_{11} & y_{12} \\ 0 & y_{21} & y_{22} \end{pmatrix} \begin{pmatrix} 0 & 1 & 0 \\ 1 & 0 & 0 \\ 0 & 0 & 0 \end{pmatrix} = \begin{pmatrix} y_{11} & 0 & 0 \\ 0 & x & 0 \\ 0 & 0 & 0 \end{pmatrix}$$The resulting matrix is clearly in $\mathcal{B}$. Because $V = V^*$, it immediately follows that $V^*\mathcal{B}V \subseteq \mathcal{B}$. Hence, $V \in G\nor{\sM}(\sB)$. Similarly, define the partial isometry $W$:$$W = \begin{pmatrix} 0 & 0 & 1 \\ 0 & 0 & 0 \\ 1 & 0 & 0 \end{pmatrix}$$
Conjugating $b$ by $W$ yields:$$W b W^* = \begin{pmatrix} y_{22} & 0 & 0 \\ 0 & 0 & 0 \\ 0 & 0 & x \end{pmatrix} \in \mathcal{B}$$
Thus, $W \in \grnor{\sM}(\sB)$. To see that $V, W$, and $\mathcal{B}$ generate $\mathcal{M}$, note that the matrix unit $e_{11} \in \mathcal{B}$. We can recover off-diagonal matrix units via multiplication:$$V e_{11} = \begin{pmatrix} 0 & 1 & 0 \\ 1 & 0 & 0 \\ 0 & 0 & 0 \end{pmatrix} \begin{pmatrix} 1 & 0 & 0 \\ 0 & 0 & 0 \\ 0 & 0 & 0 \end{pmatrix} = \begin{pmatrix} 0 & 0 & 0 \\ 1 & 0 & 0 \\ 0 & 0 & 0 \end{pmatrix} = e_{21}$$Since $e_{21}$ is in the generated von Neumann algebra, so is its adjoint $e_{12}$. Similarly, $W e_{11} = e_{31}$, which provides $e_{31}$ and $e_{13}$. Because the generated algebra contains $\mathcal{N}$ in the top-left corner, and all the standard matrix units $e_{ij}$, it generates all of $\mn{3}[\sN]$. Therefore, $W^*(\nor{\sM}(\sB)) = \mathcal{M}$
\end{proof}

\noindent\textbf{Claim.} $\mathcal{B}$ is not unitarily regular in $\mathcal{M}$; in fact, $W^*(\unor{\sM}(\sB)) = \mathcal{B} \neq \mathcal{M}$.
\begin{proof}
    Let $U \in \unor{\sM}(\sB)$ be a unitary normaliser of $\mathcal{B}$. By Lemma \ref{lem:uni_auto}, conjugation by $U$ leaves the center of $\mathcal{B}$ invariant. The center $\mathcal{Z}(\mathcal{B})$ is generated by the two central projections corresponding to the block decomposition:$$E_1 = \begin{pmatrix} 1 & 0 & 0 \\ 0 & 0 & 0 \\ 0 & 0 & 0 \end{pmatrix}, \quad E_2 = \begin{pmatrix} 0 & 0 & 0 \\ 0 & 1 & 0 \\ 0 & 0 & 1 \end{pmatrix}$$
    Because $U$ normalises $\mathcal{Z}(\mathcal{B})$, either $UE_1U^* = E_1$ or $UE_1U^* = E_2$. By Lemma \ref{lem:uni-nor-conj}, traces are preserved under unitary conjugation. In $\mathcal{M}$, we calculate the traces of these projections:$$\tau(E_1) = \frac{1}{3}, \quad \tau(E_2) = \frac{2}{3}$$Since $1/3 \neq 2/3$, it is impossible for $U E_1 U^*$ to equal $E_2$. Therefore, $U$ must act trivially on the central projections:$$UE_1U^* = E_1 \quad \text{and} \quad UE_2U^* = E_2$$This implies that $U$ commutes with $E_1$ and $E_2$. Any matrix in $\mn{3}[\sN]$ that commutes with these projections must be strictly block-diagonal with respect to the $1 \times 1$ and $2 \times 2$ partition. Thus, $U$ takes the form:$$U = \begin{pmatrix} u_1 & 0 & 0 \\ 0 & U_{22} & U_{23} \\ 0 & U_{32} & U_{33} \end{pmatrix}$$where $u_1 \in \mathcal{U}(\mathcal{N})$ and the bottom-right $2 \times 2$ block is a unitary in $\mn{2}[\sN]$. However, this is precisely the definition of $\mathcal{B}$. Therefore, any unitary normaliser of $\mathcal{B}$ already belongs to $\mathcal{B}$.$$\unor{\sM}(\sB) \subseteq \mathcal{U}(\mathcal{B})$$Consequently, the von Neumann algebra generated by the unitary normaliser is just $\mathcal{B}$ itself:
    $$W^*(\unor{\sM}(\sB)) = \mathcal{B} \subsetneq \mathcal{M}$$
    
    This completes the proof that $\mathcal{B}$ is not unitarily regular.
\end{proof}
\end{example}

\begin{theorem}\label{thm:block-diagonal-sub-fin-vNa}
    Let $\mathcal{M}$ be a finite von Neumann algebra containing a unital sub-algebra isomorphic to $\mn{N}$ (for some natural number $N \geq 2$). Let $N = d_1 + d_2 + \dots + d_k$ be a partition of $N$ into positive integers ($k \geq 2$). Let $\mathcal{N} = \mathcal{M} \cap M_N(\mathbb{C})'$. Define the block-diagonal von Neumann subalgebra:$$\mathcal{B} = \bigoplus_{i=1}^k \mn{d_i}[\mathcal{N}] \subseteq \mn{N}[\mathcal{N}] \cong \mathcal{M}$$Then the following hold:
    \begin{itemize}
        \item[(i)] $\mathcal{B}$ is always regular in $\mathcal{M}$.
        \item[(ii)] $\mathcal{B}$ is unitarily regular in $\mathcal{M}$ if and only if $d_1 = d_2 = \dots = d_k$.
    \end{itemize}
\end{theorem}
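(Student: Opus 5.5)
Throughout I work in the identification $\mathcal{M}\cong \mn{N}[\mathcal{N}]$ from Lemma \ref{lem:tr-of-mat-unit}, with scalar matrix units $e_{ij}\in\mn{N}\subseteq\mathcal{M}$. The index set $\{1,\dots,N\}$ is split into consecutive blocks $I_1,\dots,I_k$ with $|I_r|=d_r$, and $E_r=\sum_{i\in I_r}e_{ii}$ are the corresponding central projections of $\mathcal{B}$.

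For (i) I would generalise the first claim of Example \ref{ex:ii-1-case}. For each $r\ge 2$, let $i_1$ be the first index of $I_1$ and $j_r$ the first index of $I_r$, and set
\[
V_r=e_{i_1 j_r}+e_{j_r i_1}.
\]
This is a self-adjoint partial isometry. Write $b=\bigoplus_s b_s$. Then $V_r b V_r$ is supported on the diagonal positions $i_1,j_r$, with entries $(b_r)_{j_r j_r}$ and $(b_1)_{i_1 i_1}$. Since every diagonal $1\times1$ corner $e_{ii}\mathcal{N}e_{ii}$ lies in $\mathcal{B}$, this matrix lies in $\mathcal{B}$, and so $V_r\in\grnor{\sM}(\sB)$. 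Because $e_{i_1 i_1}\in\mathcal{B}$, we get $V_r e_{i_1i_1}=e_{j_r i_1}\in W^*(\nor{\sM}(\sB))$. The block $\mn{d_r}[\mathcal{N}]$ already contains all $e_{ij}$ with $i,j\in I_r$, so composing these yields every $e_{ij}$. Together with $\mathcal{N}\subseteq\mathcal{B}$ this generates $\mn{N}[\mathcal{N}]=\mathcal{M}$.

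For (ii), the direction ($\Leftarrow$): if all $d_r=d$, I would exhibit the block permutation unitary
\[
P=\sum_{r}\sum_{t=1}^{d} e_{(r+1)_t,\,r_t},
\]
with indices mod $k$, where $r_t$ denotes the $t$-th index in $I_r$. Conjugation by $P$ cyclically permutes the summands $\mn{d}[\mathcal{N}]$, so $P\in\unor{\sM}(\sB)$. Also $\mathcal{U}(\mathcal{B})\subseteq\unor{\sM}(\sB)$, and $\mathcal{B}$ is the span of its unitaries. Then $W^*(\unor{\sM}(\sB))$ contains $\mathcal{B}$ and $P$, hence $P^s e_{ii}$ for every power $s$ and every diagonal unit $e_{ii}\in\mathcal{B}$. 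These give $e_{(r+s)_t,r_t}$, and combined with the in-block matrix units they give all $e_{ij}$, hence all of $\mathcal{M}$.

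For (ii), the direction ($\Rightarrow$), I follow the second claim of Example \ref{ex:ii-1-case}, and expect this to be the main step. By Lemma \ref{lem:tr-of-mat-unit}, $T(E_r)=\tfrac{d_r}{N}1$. Let $U\in\unor{\sM}(\sB)$. By Lemma \ref{lem:uni_auto}, $U$ induces an automorphism of $\mathcal{Z}(\mathcal{B})$, and by Lemma \ref{lem:uni-nor-conj} this automorphism preserves $T$.

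The subtle point is that $\mathcal{Z}(\mathcal{B})=\bigoplus_r \mathcal{Z}(\mathcal{N})E_r$ need not be spanned by the $E_r$ alone when $\mathcal{N}$ is not a factor. So $U$ need not permute the $E_r$; it might split them using central projections. To handle this, I would decompose by $\mathcal{Z}(\mathcal{M})=\mathcal{Z}(\mathcal{N})$. Since $U$ commutes with $\mathcal{Z}(\mathcal{M})$, conjugation by $U$ fixes $\mathcal{Z}(\mathcal{M})$ pointwise, so $\operatorname{Ad}U$ is a $\mathcal{Z}(\mathcal{M})$-linear automorphism of $\mathcal{Z}(\mathcal{B})\cong\mathcal{Z}(\mathcal{M})^k$. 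Each minimal-over-$\mathcal{Z}(\mathcal{M})$ projection $zE_r$, with $z$ a central projection, is then sent to some $zE_s$ possibly after cutting $z$ further. Trace preservation gives $zd_r=zd_s$, which forces $s$ to be an index with $d_s=d_r$.

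Let $\sim$ group indices by equal $d$ and set $F_c=\sum_{d_r=c}E_r$. The argument above shows $UF_cU^*=F_c$, i.e.\ every unitary normaliser commutes with each $F_c$. Hence $W^*(\unor{\sM}(\sB))\subseteq\{F_c\}'\cap\mathcal{M}$. If the $d_r$ are not all equal, there are at least two distinct values $c$, so some $F_c$ is a nontrivial projection. Then $e_{ij}$ with $i$ and $j$ in blocks of different sizes is not in $\{F_c\}'$, and $\mathcal{B}$ is not unitarily regular. Making the $\mathcal{Z}(\mathcal{M})$-fibrewise permutation argument rigorous is the obstacle. Alternatively, one can avoid it by directly computing $T(UE_rU^*)$: write $UE_rU^*=\sum_s z_{s}E_s$ with $z_s$ central projections, use that it is a projection and that $T$ is preserved, and compare central supports.
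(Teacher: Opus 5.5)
Your part (i) and the direction ($\Leftarrow$) of (ii) are correct and are essentially the paper's argument: self-adjoint swap partial isometries $e_{rs}+e_{sr}$ for (i), and a block-permutation unitary for ($\Leftarrow$). The gap is in ($\Rightarrow$) of (ii), at exactly the point you flag. You need that every $U\in\unor{\sM}(\sB)$ satisfies $UE_rU^*\le\sum_{s:\,d_s=d_r}E_s$. You assert this (``sent to some $zE_s$ possibly after cutting $z$ further'') but never prove it, and your fallback cannot prove it. Write $UE_rU^*=\sum_s p_sE_s$ with $p_s\in\mathcal{Z}(\mathcal{M})$. Being a projection only gives $p_s^2=p_s=p_s^*$. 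It says nothing about $p_sp_{s'}$ for $s\neq s'$, because the $E_s$ are already orthogonal. Without that orthogonality, the center-valued trace and the central support of the single element $UE_rU^*$ are too weak. For $(d_1,d_2,d_3)=(2,1,1)$ we have $T(E_1)=T(E_2+E_3)=\tfrac12$, and both have central support $1$, so nothing you list rules out $UE_1U^*=E_2+E_3$. The paper's proof takes the same route and asserts $p_{im}p_{in}=0$ ``since $UE_iU^*$ is a projection''. That is not a valid reason, so following the paper does not close the hole either.

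The missing ingredient is that $\operatorname{Ad}U$ is a $\mathcal{Z}(\mathcal{M})$-linear automorphism of $\mathcal{Z}(\mathcal{B})$, applied to subprojections. For $s\neq s'$ put $z=p_sp_{s'}$. Then $zE_s\le UE_rU^*$ is a projection in $\mathcal{Z}(\mathcal{B})$, so $U^*zE_sU$ is a projection in $\mathcal{Z}(\mathcal{B})$ below $E_r$. Since $\mathcal{Z}(\mathcal{B})E_r=\mathcal{Z}(\mathcal{M})E_r$, it equals $yE_r$ for a central projection $y$. Because $U$ commutes with $y$, this gives $zE_s=yUE_rU^*=\sum_t yp_tE_t$. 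The map $c\mapsto cE_t$ is injective on $\mathcal{Z}(\mathcal{M})$, so comparing coefficients yields $yp_{s'}=0$ and $z=yp_s$. Hence $z=zp_{s'}=yp_sp_{s'}=0$. With the $p_s$ mutually orthogonal, $T(UE_rU^*)=T(E_r)$ reads $d_r=\sum_s p_sd_s$. Multiplying by $p_s$ gives $p_s(d_r-d_s)=0$, that is, $UE_rU^*\le F_{d_r}$. Summing over the $r$ with $d_r=c$ gives $UF_cU^*\le F_c$ for every $c$, and since $\sum_c F_c=1$ all of these are equalities. From here your commutant argument with the $F_c$ finishes the proof, and it is cleaner than the paper's ``strict zeroes in the off-diagonal block'' wording.
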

\begin{proof}
    \textbf{(Regularity)}. We must show $W^*(\grnor{\sM}(\sB)) = \mathcal{M}$.
The partition of $N$ induces a partition of the index set $\{1, 2, \dots, N\}$ into disjoint subsets $I_1, I_2, \dots, I_k$, where $\vert{}I_m\vert{} = d_m$. The algebra $\mathcal{B}$ consists of matrices in $\mn{N}[\mathcal{N}]$ that are zero outside the diagonal blocks $I_m \times I_m$. 

For any two indices $r, s \in \{1, \dots, N\}$, define the element:$$v = e_{rs} + e_{sr},$$ here $(e_{ij})_{i,j = 1, \dots, N}$ are the standard matrix units of $\mn{N}[\sN] \cong \sM$. Since $v = v^*$ and $v^3 = v$, $v$ is a self-adjoint partial isometry.

Let $b \in \mathcal{B}$. Because $b$ is block-diagonal, its entries $b_{xy}$ are zero unless $x, y$ belong to the same block $I_m$. Conjugating $b$ by $v$ yields $v b v^*$. Matrix multiplication shows that this operation simply swaps the $r$-th row with the $s$-th row, and the $r$-th column with the $s$-th column.Since $b$ contains arbitrary elements of $\mathcal{N}$ in its blocks, shifting these scalar elements via the permutation $(r \leftrightarrow s)$ produces a matrix that remains in $\mn{N}[\mathcal{N}]$. More importantly, because we only moved entries around and set the rest to zero, $v b v^*$ remains perfectly block-diagonal. 

Thus, $v \mathcal{B} v^* \subseteq \mathcal{B}$, meaning $v \in \grnor{\sM}(\sB)$.

By multiplying $v = e_{rs} + e_{sr}$ with appropriate diagonal matrix units (which are contained in $\mathcal{B}$), we can isolate $e_{rs}$. Thus, the von Neumann algebra generated by $\grnor{\sM}(\sB)$ contains all matrix units $e_{rs}$ and the block-diagonal entries $\mathcal{N}$. Together, these generate all of $M_N(\mathcal{N}) \cong \mathcal{M}$. Therefore, $\mathcal{B}$ is regular. 

\textbf{(Unitary regularity)}. Let $E_i = \sum_{r \in I_i} e_{rr}$ be the central projection in $\mathcal{B}$ corresponding to the identity of the $i$-th block, $\mn{d_i}[\mathcal{N}]$. The center of $\mathcal{B}$ is given by:$$\mathcal{Z}(\mathcal{B}) = \bigoplus_{m=1}^k \mathcal{Z}(\mathcal{N}) E_m$$By Lemma \ref{lem:tr-of-mat-unit}, the center-valued trace of $E_i$ is:$$T(E_i) = \sum_{r \in I_i} T(e_{rr}) = d_i \left( \frac{1}{N} 1_{\mathcal{Z}(\mathcal{M})} \right) = \frac{d_i}{N} 1_{\mathcal{Z}(\mathcal{M})}$$

$(\impliedby)$ : Assume $d_1 = d_2 = \dots = d_k = d$. Then $N = kd$, and $\mathcal{M}$ is a $k \times k$ block matrix algebra over $\mn{d}[\mathcal{N}]$, with all blocks being of equal size.
Let $P \in \mn{k} \subseteq \mathcal{M}$ be any permutation matrix acting on the $k$ blocks. Because all blocks have the same dimension $d$, $P$ is a global unitary in $\mathcal{M}$ ($P^*P = PP^* = 1$). Conjugating $\mathcal{B}$ by $P$ merely permutes the identical blocks, so $P \mathcal{B} P^* = \mathcal{B}$.
Hence, $P \in \unor{\sM}(\sB)$. These unitary block permutations, combined with the block-diagonal elements of $\mathcal{B}$, trivially generate the entire matrix algebra $\mathcal{M}$. Thus, $W^*(\unor{\sM}(\sB)) = \mathcal{M}$.

$(\implies)$ : Assume $\mathcal{B}$ is unitarily regular, so $W^*(\unor{\sM}(\sB)) = \mathcal{M}$. Suppose, for the sake of contradiction, that there exist blocks of different sizes, i.e., $d_i \neq d_j$ for some $i \neq j$.

Let $U \in \unor{\sM}(\sB)$. By Lemma \ref{lem:uni-nor-conj}, $U$ acts as a trace-preserving automorphism on $\mathcal{Z}(\mathcal{B})$. Therefore, $U E_i U^*$ must be a projection in $\mathcal{Z}(\mathcal{B})$. Because of the structure of $\mathcal{Z}(\mathcal{B})$, we can uniquely write:$$U E_i U^* = \sum_{m=1}^k p_{im} E_m$$where each $p_{im}$ is a projection in $\mathcal{Z}(\mathcal{N}) = \mathcal{Z}(\mathcal{M})$. (Since $U E_i U^*$ is a projection, $p_{im} p_{in} = 0$ for $m \neq n$). Now, apply the center-valued trace $T$ to both sides. By Lemma 2, $T(U E_i U^*) = T(E_i) = \frac{d_i}{N} 1_{\mathcal{Z}(\mathcal{M})}$. Applying $T$ to the right side gives:$$T\left( \sum_{m=1}^k p_{im} E_m \right) = \sum_{m=1}^k p_{im} T(E_m) = \sum_{m=1}^k p_{im} \frac{d_m}{N}$$Equating the two yields an identity in the abelian von Neumann algebra $\mathcal{Z}(\mathcal{M})$:$$\frac{d_i}{N} 1_{\mathcal{Z}(\mathcal{M})} = \sum_{m=1}^k p_{im} \frac{d_m}{N}$$Multiply both sides of this equation by the projection $p_{ij}$. Because the projections $p_{im}$ are mutually orthogonal, $p_{ij} p_{im} = 0$ for $m \neq j$, and $p_{ij}^2 = p_{ij}$. This leaves:$$p_{ij} \frac{d_i}{N} = p_{ij} \frac{d_j}{N}$$$$p_{ij} \left( \frac{d_i - d_j}{N} \right) = 0$$Since we assumed $d_i \neq d_j$, the scalar $\frac{d_i - d_j}{N}$ is non-zero. This forces:$$p_{ij} = 0$$This means the central projection $U E_i U^*$ contains absolutely no support on $E_j$. Consequently, no unitary $U \in \unor{\sM}(\sB)$ can map any non-zero element of block $i$ into block $j$.

Because this holds for all $U \in \unor{\sM}(\sB)$, the von Neumann algebra generated by the unitary normaliser, $W^*(\unor{\sM}(\sB))$, will have strict zeroes in the off-diagonal block corresponding to $(i, j)$. Thus, $W^*(\unor{\sM}(\sB)) \subsetneq \mathcal{M}$, contradicting our assumption that $\mathcal{B}$ is unitarily regular. Therefore, it must be that $d_1 = d_2 = \dots = d_k$.
\end{proof}

\begin{cor}
    Let $\mathcal{M}$ be any finite von Neumann algebra that does not have an abelian direct summand. Then $\mathcal{M}$ contains a regular, non-unitarily regular subalgebra.
\end{cor}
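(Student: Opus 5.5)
The plan is to reduce to Theorem \ref{thm:block-diagonal-sub-fin-vNa} with $N=3$ and the partition $3=1+2$. That theorem says the block-diagonal subalgebra $\mathcal{B}=\mathcal{N}\oplus\mn{2}[\mathcal{N}]\subseteq\mn{3}[\mathcal{N}]\cong\mathcal{M}$ is regular. By part (ii), since $1\neq 2$, it is not unitarily regular. So the only real task is to show that a finite von Neumann algebra $\mathcal{M}$ with no abelian direct summand contains a unital copy of $\mn{3}$. Equivalently, we need three mutually orthogonal, pairwise equivalent projections summing to $1$.

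First I will split $\mathcal{M}$ by type. Since $\mathcal{M}$ is finite, it decomposes as $\mathcal{M}=\mathcal{M}_{\II_1}\oplus\bigoplus_{n\ge 1}\mathcal{M}_n$, where $\mathcal{M}_n$ is of type $\I_n$. The hypothesis of no abelian summand kills the $\I_1$ part, so only $n\geq 2$ occurs.

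On the $\II_1$ summand I will use the halving lemma: in a type $\II_1$ algebra, every projection is a sum of two equivalent orthogonal projections. Iterating this, or using the center-valued trace $T$ (which attains every central value $z$ with $0\le z\le 1$ on projections), I obtain projections $p_1,p_2,p_3$ with $T(p_i)=\tfrac13$ and $\sum p_i=1$. In a finite algebra, equal center-valued trace implies equivalence, and this gives $3\times 3$ matrix units. This is the $\II_1$-factor argument of Lemma \ref{lem:mn-subalg}, lifted to the center-valued setting.

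The type $\I_n$ summands need a different choice, because $\mn{3}$ need not sit unitally inside $\mn{2}[\mathcal{Z}]$; this is the step I expect to be the main obstacle. I will avoid forcing a uniform $N=3$ and instead allow $N$ to depend on the summand, using Theorem \ref{thm:block-diagonal-sub-fin-vNa} summand by summand.
\begin{itemize}
\item On $\mathcal{M}_n\cong\mn{n}[\mathcal{Z}_n]$ with $n\ge 2$, I take the partition $n=1+(n-1)$ when $n\ge 3$. For $n=2$, the only partition $2=1+1$ is unitarily regular, so instead I take the block subalgebra $\mathcal{B}_2=\mathcal{M}_2$ itself on that summand.
\item On the $\II_1$ summand, I use the $1+2$ partition of $\mn{3}$.
\end{itemize}

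Set $\mathcal{B}=\bigoplus\mathcal{B}_j$. Regularity passes to direct sums, since normalisers of the summands extended by $0$, or by $1$ on the other summands, still normalise $\mathcal{B}$. Failure of unitary regularity needs only one summand that fails. The difficulty is that this argument requires at least one summand with $n\ge3$ or a $\II_1$ part. If $\mathcal{M}$ is purely of type $\I_2$, I will take $\mathcal{B}$ to be the diagonal masa-like algebra $\mathcal{Z}\oplus\mathcal{Z}$ twisted off-center. Since abelian subalgebras make regularity and unitary regularity coincide (Corollary \ref{abelian case}), I expect this case to need either a separate non-abelian construction or a check of whether the corollary must exclude it.

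Finally, to show that a unitary normaliser of the direct sum cannot generate $\mathcal{M}$, I will compress by the central projection $z$ of the bad summand. Since $z$ is central in $\mathcal{M}$, every $U\in\unor{\sM}(\sB)$ satisfies $Uz\in\unor{z\mathcal{M}}(z\mathcal{B})$. Hence $z\,W^*(\unor{\sM}(\sB))\subseteq W^*(\unor{z\mathcal{M}}(z\mathcal{B}))\subsetneq z\mathcal{M}$, and therefore $W^*(\unor{\sM}(\sB))\neq\mathcal{M}$.
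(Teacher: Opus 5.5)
You have not proved the statement in the homogeneous type $\I_2$ case that you flagged. That gap cannot be closed, because the statement is false there.

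\textbf{What your argument does prove.} Your summand-by-summand construction is correct wherever it applies, and it is more careful than the paper's own proof. The paper asserts that halving yields a unital copy of $\mn{3}$ in $\mathcal{M}$, ``passing to a direct summand'' in the type $\I_n$ case. This fails in three ways:
\begin{itemize}
\item a type $\I_n$ summand with $n$ odd cannot be halved;
\item $\mn{3}$ does not embed unitally in, say, $\mn{4}$ or $\mn{2}\oplus\mn{3}$;
\item a subalgebra of a summand $z\mathcal{M}$ is not a unital subalgebra of $\mathcal{M}$.
\end{itemize}
Your repair makes the paper's remark rigorous. You let $N$ depend on the summand: the partition $1+(n-1)$ on a type $\I_n$ summand with $n\geq 3$, the partition $1+2$ on the $\II_1$ part, and the whole summand on the $\I_2$ part. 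You then use the central compression $z\,W^*(\unor{\sM}(\sB))\subseteq W^*(\unor{z\sM}(z\sB))\subsetneq z\mathcal{M}$. So whenever $\mathcal{M}$ has a nonzero direct summand of type $\II_1$, or of type $\I_n$ with $n\geq 3$, your proof is complete.

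\textbf{Why the remaining case fails.} Take $\mathcal{M}=\mn{2}$, a finite factor with no abelian direct summand. Its unital von Neumann subalgebras are $\C 1$, the unitary conjugates of the diagonal masa $\mathcal{D}$, and $\mn{2}$ itself. A $3$-dimensional subalgebra would need three nonzero orthogonal projections, which $\mn{2}$ does not have. Each of these subalgebras is unitarily regular:
\begin{itemize}
\item every unitary normalises $\C 1$;
\item the flip $e_{12}+e_{21}$ is a unitary normalising $\mathcal{D}$, and together with $\mathcal{D}$ it generates $\mn{2}$;
\item the case $\mn{2}$ is trivial.
\end{itemize}
So $\mn{2}$ contains no regular, non-unitarily regular subalgebra.

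The paper's proof has the same hole, hidden in its parenthetical remark. The check you anticipated therefore comes out negative: the corollary needs an additional hypothesis. For example, assume that $\mathcal{M}$ has a nonzero summand of type $\II_1$ or of type $\I_n$ with $n\geq 3$. Given the other assumptions, this is the same as saying $\mathcal{M}$ is not homogeneous of type $\I_2$. Under that hypothesis your argument proves the statement. Whether homogeneous type $\I_2$ algebras with larger centre can contain such subalgebras is a separate question that neither your argument nor the paper's addresses.
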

\begin{proof}
    By the Halving Lemma in von Neumann algebras, any finite algebra without an abelian direct summand can be decomposed into a $2 \times 2$ matrix algebra, and by extension (or halving again), we can construct a unital embedding of $\mn{3}$ into $\mathcal{M}$ (sometimes passing to a direct summand if the type is $\I_n$, choosing $n \geq 3$).
Once $\mn{3} \subseteq \mathcal{M}$ is secured, we apply the Theorem \ref{thm:block-diagonal-sub-fin-vNa} with the partition $3 = 1 + 2$. The block diagonal subalgebra over the relative commutant $\mathcal{N}$ will immediately supply the required example.
\end{proof}

\begin{cor}\label{cor:exists-reg-not-uni-ii-1-fact}
    Every $\II_1$ factor $\mathcal{M}$ contains a von Neumann subalgebra that is regular but not unitarily regular.
\end{cor}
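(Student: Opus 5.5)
The plan is to specialise Theorem \ref{thm:block-diagonal-sub-fin-vNa} to $N=3$ with the partition $3=1+2$, exactly as in Example \ref{ex:ii-1-case}. Let $\mathcal{M}$ be a $\II_1$ factor. By Lemma \ref{lem:mn-subalg} with $n=3$, $\mathcal{M}$ contains a system of matrix units $\{v_{ij}\}_{i,j=1}^3$ with $\sum_i v_{ii}=1$, so it contains a unital copy of $\mn{3}$. The relative commutant $\mathcal{N}=\mathcal{M}\cap\{v_{ij}\}'$ is again a $\II_1$ factor, and we have the identification $\mathcal{M}\cong\mn{3}[\mathcal{N}]$.

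Next, we take the block-diagonal subalgebra
\[
\mathcal{B}=\mathcal{N}\oplus\mn{2}[\mathcal{N}]\subseteq\mn{3}[\mathcal{N}]\cong\mathcal{M}
\]
associated with $d_1=1$, $d_2=2$. Since $\mathcal{M}$ is a factor, it is in particular a finite von Neumann algebra. Theorem \ref{thm:block-diagonal-sub-fin-vNa}(i) then shows that $\mathcal{B}$ is regular in $\mathcal{M}$. Because $d_1\neq d_2$, part (ii) shows that $\mathcal{B}$ is not unitarily regular.

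Alternatively, one can argue directly through Example \ref{ex:ii-1-case}. The partial isometries $e_{12}+e_{21}$ and $e_{13}+e_{31}$ lie in $\grnor{\sM}(\sB)$, and together with $\mathcal{B}$ they generate $\mathcal{M}$. On the other hand, every unitary normaliser must fix the central projections $E_1$ and $E_2$, since $\tau(E_1)=1/3\neq 2/3=\tau(E_2)$. Such a unitary is therefore block-diagonal, so $W^*(\unor{\sM}(\sB))=\mathcal{B}\neq\mathcal{M}$.

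There is no real obstacle here. The only point to check is that the hypotheses of the theorem hold: $\mathcal{M}$ must contain a unital copy of $\mn{3}$ and be finite, and both facts are supplied by Lemma \ref{lem:mn-subalg}. The trace comparison in the factor case is scalar-valued, which makes the non-unitary-regularity argument immediate.
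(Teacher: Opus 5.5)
Your proposal is correct and matches the paper's proof: Lemma \ref{lem:mn-subalg} supplies a unital copy of $\mn{3}$, and Theorem \ref{thm:block-diagonal-sub-fin-vNa} with the partition $3=1+2$ gives the result; your alternative via Example \ref{ex:ii-1-case} is just that theorem worked out in this special case. One small wording fix: $\mathcal{M}$ is finite because it is of type $\II_1$, not because it is a factor, and this is not something Lemma \ref{lem:mn-subalg} provides.
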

\begin{proof}
By Lemma \ref{lem:mn-subalg}, $\mathcal{M}$ always contains a unital copy of $\mn{3}$. Choose the partition $3 = 1 + 2$ and apply the previous Theorem \ref{thm:block-diagonal-sub-fin-vNa}.
\end{proof}

\begin{cor}
    Let $X$ be a measure space and $\mathcal{M} = \mn{N}[L^\infty(X)]$. Let $N = d_1 + \dots + d_k$ be a partition. The block-diagonal algebra $\mathcal{B} = \bigoplus_{i=1}^k \mn{d_i}[L^\infty(X)]$ is always regular, and is unitarily regular if and only if $d_1 = \dots = d_k$.
\end{cor}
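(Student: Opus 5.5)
The plan is to deduce the corollary directly from Theorem \ref{thm:block-diagonal-sub-fin-vNa}, with $\mathcal{M} = \mn{N}[L^\infty(X)]$. Three things need checking. First, $\mathcal{M}$ is a finite von Neumann algebra containing a unital copy of $\mn{N}$. Second, the relative commutant of that copy is $L^\infty(X)$. Third, the block-diagonal algebra of the theorem is exactly the $\mathcal{B}$ in the statement. Once these hold, parts (i) and (ii) of the theorem give the two conclusions verbatim.

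Finiteness comes first. I would take $X$ to be $\sigma$-finite, or more generally localizable, so that $L^\infty(X)$ is a von Neumann algebra. Then $\mathcal{M} = \mn{N}[L^\infty(X)] \cong \mn{N} \otimes L^\infty(X)$ is a type $\I_N$ von Neumann algebra. It has the explicit faithful normal center-valued trace
\[
T\bigl((f_{ij})\bigr) = \tfrac{1}{N}\textstyle\sum_i f_{ii}\, 1_{\mn{N}},
\]
which takes values in $\mathcal{Z}(\mathcal{M}) = 1 \otimes L^\infty(X)$. This trace is the only property of finiteness used in the proof of the theorem, through Lemmas \ref{lem:tr-of-mat-unit} and \ref{lem:uni-nor-conj}.

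The unital copy of $\mn{N}$ is $\mn{N}\otimes 1$, spanned by the constant matrix units $e_{ij}$. Its relative commutant consists of matrices commuting with every $e_{ij}$. Commuting with the $e_{ii}$ forces a matrix to be diagonal, and commuting with the $e_{ij}$ then forces the diagonal entries to be equal. So the relative commutant is $\{f\cdot 1 : f\in L^\infty(X)\} \cong L^\infty(X)$, and hence $\mathcal{N} = L^\infty(X)$. The identification $\mathcal{M}\cong\mn{N}[\mathcal{N}]$ from Lemma \ref{lem:tr-of-mat-unit} is then the identity map. The theorem's $\bigoplus_i \mn{d_i}[\mathcal{N}]$ is literally $\bigoplus_i \mn{d_i}[L^\infty(X)]$, embedded block-diagonally along the chosen partition of indices.

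The main point to watch is the degenerate case where $L^\infty(X)$ is trivial or has atoms. That causes no trouble, because the argument in the theorem's "$\implies$" direction works with central projections $p_{im}\in\mathcal{Z}(\mathcal{N})$ and only needs $d_i\neq d_j$ to force $p_{ij}=0$. I would also remark that the corollary requires $N\ge 2$ and $k\ge 2$, as in the theorem. For $k=1$ the statement is trivial, since then $\mathcal{B}=\mathcal{M}$.
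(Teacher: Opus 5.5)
Your proposal is correct and follows the paper's proof: you check that $\mathcal{M}=\mn{N}[L^\infty(X)]$ is finite and contains $\mn{N}\otimes 1$ unitally with relative commutant $L^\infty(X)$, then invoke Theorem \ref{thm:block-diagonal-sub-fin-vNa}. Your extra remarks, which the paper leaves implicit, are harmless refinements: taking $X$ localizable so that $L^\infty(X)$ is a von Neumann algebra, writing down the center-valued trace explicitly, and noting the trivial case $k=1$.
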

\begin{proof}
    Let $\mathcal{M} = \mn{N}[L^\infty(X)]$. This is trivially a finite von Neumann algebra. It contains the constant matrix algebra $\mn{N}$ unitally. The relative commutant is $\mathcal{N} = \mathcal{M} \cap \mn{N}' \cong L^\infty(X) \otimes 1_N \cong L^\infty(X)$. Now apply Theorem \ref{thm:block-diagonal-sub-fin-vNa}.
\end{proof}

\begin{theorem}\label{thm:exists-reg-not-uni-semi-finite}
    Let $\mathcal{M}$ be a Type $\I_\infty$ or Type $\II_\infty$ factor. Then there always exists a von Neumann subalgebra $\mathcal{B} \subseteq \mathcal{M}$ that is regular but not unitarily regular.
\end{theorem}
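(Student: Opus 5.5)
The finite-trace argument of Theorem \ref{thm:block-diagonal-sub-fin-vNa} does not transfer directly. In an infinite factor the block-diagonal algebra for $3=1+2$ has both central projections infinite, hence equivalent. Since $\sN\cong\mn{2}[\sN]$ when $\sN$ is properly infinite, a unitary can swap the blocks, so that example is unitarily regular. Instead I will separate the blocks by finiteness versus infiniteness, which unitary conjugation preserves. Fix a nonzero \emph{finite} projection $p\in\sM$: a minimal projection in the type $\I_\infty$ case, or any finite projection in the type $\II_\infty$ case (these exist by semifiniteness). Since $\sM$ is infinite, $1-p$ is infinite. Set
\[
\sB := p\sM p \oplus (1-p)\sM(1-p) = \{x\in\sM : xp=px\}.
\]
This is a unital von Neumann subalgebra of $\sM$, and its centre is spanned by $p$ and $1-p$ because $p\sM p$ and $(1-p)\sM(1-p)$ are factors.

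\emph{Not unitarily regular.} Let $u\in\unor{\sM}(\sB)$. By Lemma \ref{lem:uni_auto}, $upu^*$ is a nonzero central projection of $\sB$, so it is $p$, $1-p$ or $1$. Now $upu^*\sim p$ is finite, while $1-p$ and $1$ are infinite, so $upu^*=p$. Hence $u$ commutes with $p$, that is, $u\in\sB$. Therefore $W^*(\unor{\sM}(\sB))=\sB$. This is a proper subalgebra, since $p\sM(1-p)\neq 0$ in a factor.

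\emph{Regular.} In a factor, $p$ finite and $1-p$ infinite give $p\precsim 1-p$. So there is a partial isometry $v\in\sM$ with $vv^*=p$ and $v^*v=q\le 1-p$. I claim $v\in\grnor{\sM}(\sB)$. For $b\in\sB$ we have $vbv^*=v(1-p)b(1-p)v^*\in p\sM p\subseteq\sB$, because $v=v(1-p)$ and $v=pv$. Likewise $v^*bv=v^*pbpv\in q\sM q\subseteq(1-p)\sM(1-p)\subseteq\sB$.

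It remains to check that $\sB\cup\{v\}$ generates $\sM$. For $x\in\sM$ we have $v^*px(1-p)=v^*x(1-p)$, and since $v^*=(1-p)v^*$ this element lies in $(1-p)\sM(1-p)\subseteq\sB$. Therefore
\[
px(1-p)=vv^*px(1-p)=v\,\bigl(v^*px(1-p)\bigr)\in v\sB,
\]
and taking adjoints gives $(1-p)xp\in\sB v^*$. Together with the diagonal corners already in $\sB$, every $x=pxp+px(1-p)+(1-p)xp+(1-p)x(1-p)$ lies in $W^*(\sB\cup\{v\})\subseteq W^*(\grnor{\sM}(\sB))$. By Theorem \ref{thm:spn-nor-equl-gr-nor}, $\sB$ is then regular.

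The only delicate point is choosing the invariant that blocks unitary normalisers. In the infinite setting the centre-valued trace is unavailable, and comparing equal-sized matrix blocks fails as noted above. Finiteness of $p$ against infiniteness of $1-p$ resolves this, and the remaining steps are routine comparison theory in a factor.
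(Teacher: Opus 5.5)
Your proof is correct and follows essentially the same approach as the paper. Both use the corner algebra $E_1\mathcal{M}E_1 \oplus E_2\mathcal{M}E_2$ with $E_1$ finite and $E_2$ infinite, obtain regularity from a partial isometry carrying a subprojection of $E_2$ onto $E_1$, and block unitary normalisers because a finite projection cannot be unitarily conjugated onto an infinite one. Your explicit factorisation $px(1-p)=v\bigl(v^*px(1-p)\bigr)$ makes precise the paper's brief claim that such partial isometries generate the off-diagonal corner, and you also rule out $upu^*=1$, a case the paper does not mention.
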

\begin{proof}
    Since $\mathcal{M}$ is semi-finite but properly infinite, we can partition the identity as $1 = E_1 + E_2$, where $E_1$ is a non-zero finite projection and $E_2$ is an infinite projection.

    Define the diagonal corner subalgebra:
    $$\mathcal{B} = E_1 \mathcal{M} E_1 \oplus E_2 \mathcal{M} E_2$$

    \textbf{Claim.} $\mathcal{B}$ is regular.

    Let $v \in \mathcal{M}$ be any partial isometry such that its initial projection $v^*v \leq E_2$ and its final projection $vv^* \leq E_1$. Let's check if $v \in \grnor{\sM}(\sB)$. Take any $b = b_1 \oplus b_2 \in \mathcal{B}$ (with $b_i \in E_i\mathcal{M}E_i$). Conjugating by $v$ gives:$$v b v^* = v b_2 v^*$$Because $vv^* \leq E_1$, this resulting operator is entirely supported in $E_1\mathcal{M}E_1$, which is inside $\mathcal{B}$. Similarly, $v^* b v = v^* b_1 v \in E_2\mathcal{M}E_2 \subseteq \mathcal{B}$. Thus, every such partial isometry belongs to $\grnor{\sM}(\sB)$. Because $\mathcal{M}$ is a factor, the span of partial isometries between $E_2$ and $E_1$ generates the entire off-diagonal corner $E_1\mathcal{M}E_2$ (and their adjoints generate $E_2\mathcal{M}E_1$). Therefore, $W^*(\grnor{\sM}(\sB)) = \mathcal{M}$.

    \textbf{Claim.} $\mathcal{B}$ is not unitary regular.
    
    Suppose $U \in \unor{\sM}(\sB)$. Because $U$ normalizes $\mathcal{B}$, it must permute its central projections, $E_1$ and $E_2$.
 If $U E_1 U^* = E_2$, it would imply that $E_1$ and $E_2$ are Murray-von Neumann equivalent. But $E_1$ is finite and $E_2$ is infinite, making equivalence impossible.
Therefore, $UE_1U^* = E_1$ and $UE_2U^* = E_2$. This means $U$ commutes with the projections, forcing $U \in \mathcal{B}$.
Thus, $W^*(\unor{\sM}(\sB)) = \mathcal{B} \neq \mathcal{M}$.
\end{proof}

\begin{prop}\label{prop:exists-reg-not-uni-type-iii}
    Let $\mathcal{M}$ be a Type $\III$ factor. Then there always exists a von Neumann subalgebra $\mathcal{B} \subseteq \mathcal{M}$ that is regular but not unitarily regular.
\end{prop}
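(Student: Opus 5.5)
The obstruction used in Theorem \ref{thm:exists-reg-not-uni-semi-finite} (finite versus infinite central projections) is not available here. In a $\sigma$-finite type $\III$ factor all nonzero projections with nonzero complements are unitarily equivalent. So a corner algebra $E_1\sM E_1\oplus E_2\sM E_2$, or a block-diagonal algebra $\sN\oplus\mn{2}[\sN]$ as in Theorem \ref{thm:block-diagonal-sub-fin-vNa}, is probably unitarily regular, because $\sN\cong\mn{2}[\sN]$. The unitary normaliser must therefore be blocked by an \emph{algebraic} invariant of the central summands rather than by a trace or a finiteness invariant. The plan is to take summands that are not isomorphic as von Neumann algebras.

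First, as in Lemma \ref{lem:mn-subalg}, fix a unital system of $3\times 3$ matrix units in $\sM$, using that in a type $\III$ factor any two nonzero projections are equivalent. Let $\sN=\sM\cap\{e_{ij}\}'$, so that $\sM\cong \sN\otimes\mn{3}$ with $\sN$ again a type $\III$ factor. Put $D=\C\oplus\mn{2}\subseteq\mn{3}$ and $\sB=\sA\otimes D$, where $\sA\subseteq\sN$ is a unital regular subalgebra with $\sA\not\cong\mn{2}[\sA]$. The choice I would try first is an abelian regular $\sA$, for instance a Cartan masa.

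\textbf{Regularity.} For $n\in\nor{\sN}(\sA)$ and $v=e_{rs}+e_{sr}$, the computation in the proof of Theorem \ref{thm:block-diagonal-sub-fin-vNa} shows that $n\otimes v$ normalises $\sA\otimes D$. Together with $1\otimes e_{ii}\in\sB$, these elements generate
\[
W^*(\nor{\sN}(\sA))\otimes\mn{3}=\sN\otimes\mn{3}=\sM .
\]

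\textbf{Failure of unitary regularity.} Let $U\in\unor{\sM}(\sB)$. By Lemma \ref{lem:uni_auto}, $U$ permutes the central projections $E_1=1\otimes e_{11}$ and $E_2=1\otimes(e_{22}+e_{33})$ of $\sB$ (this uses that $\sZ(\sA)$ is handled appropriately; if $\sA$ is abelian one must argue with the minimal central summands of the type-$\I_1$ and type-$\I_2$ parts of $\sB$ instead). If $UE_1U^*=E_2$, then $\mathrm{Ad}\,U$ restricts to an isomorphism $E_1\sB E_1\cong\sA$ onto $E_2\sB E_2\cong\mn{2}[\sA]$. For abelian $\sA$ this is impossible, since the first algebra is commutative and the second is not. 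Hence $U$ commutes with $E_1$ and $E_2$, and so
\[
W^*(\unor{\sM}(\sB))\subseteq E_1\sM E_1+E_2\sM E_2\neq\sM .
\]
When $\sA$ is abelian the cleaner formulation is as follows: $\sB$ has a type $\I_1$ central summand $E_1$ and a type $\I_2$ central summand $E_2$, and automorphisms preserve type.

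\textbf{Main obstacle.} The hard step is producing the coefficient algebra $\sA\subseteq\sN$ that is regular and still distinguishes the two blocks. Not every type $\III$ factor has a Cartan masa, so the abelian choice may be unavailable. If it fails, I would look for any regular $\sA\subseteq\sN$ that has a type-$\I$ (or otherwise rigid) central summand, so that $\sA\not\cong\mn{2}[\sA]$ is detected by type decomposition. A typical candidate is a type-$\I$ subalgebra $\sA=\sZ\otimes B(K)$ arising from a regular abelian piece. This existence question is where I expect the real work, or a different construction, to be needed.
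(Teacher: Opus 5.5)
\textbf{There is a genuine gap: you never produce the coefficient algebra $\sA$.} Your reduction is sound, and you are right that the obstruction in the type $\III$ case must be algebraic rather than trace-theoretic. But the argument rests on a unital regular $\sA\subseteq\sN$ whose blocks $\sA$ and $\mn{2}[\sA]$ can be told apart, and you leave the existence of such an $\sA$ open. As you note yourself, a Cartan masa need not exist in a type $\III$ factor. Your fallback, ``a type-$\I$ subalgebra arising from a regular abelian piece'', is not an actual construction. So the step you flag as the main obstacle really is a hole in the proof as written.

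\textbf{The fix is immediate.} Your argument needs $\sA$ to be abelian and regular, not maximal abelian. The choice $\sA=\C 1_{\sN}$ qualifies, since $\unor{\sN}(\C 1)=\sU(\sN)$ generates $\sN$.

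With this choice, $\sB=1\otimes D\cong\C\oplus\mn{2}$ is finite dimensional and $\sZ(\sB)=\C E_1+\C E_2$. A unitary normaliser $U$ must therefore send $E_1$ to $E_1$ or to $E_2$. The second option would give $\C\cong\mn{2}$, which is impossible by dimension. Hence $W^*(\unor{\sM}(\sB))\subseteq\{E_1\}'\cap\sM\neq\sM$, with no type-decomposition detour.

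For regularity, the unitaries of $\sN\otimes 1\subseteq\sB'$ are unitary normalisers and span $\sN\otimes 1$. The elements $1\otimes(e_{rs}+e_{sr})\in\grnor{\sM}(\sB)$, cut by $1\otimes e_{ss}\in\sB$, give $1\otimes\mn{3}$. So $W^*(\grnor{\sM}(\sB))\supseteq\sN\otimes\mn{3}=\sM$.

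This is exactly the paper's example $\C P_1\oplus\mn{2}P_2$ with $P_1=e_{11}$, $f_1=e_{22}$, $f_2=e_{33}$. The paper's groupoid normalisers $v_0u$, with $u\in\sU(P_1\sM P_1)$, correspond to your $u'\otimes e_{21}$ with $u'\in\sU(\sN)$.

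\textbf{Minor point.} All nonzero projections of a type $\III$ factor are equivalent only in the $\sigma$-finite case. Unital $3\times 3$ matrix units still exist in any type $\III$ factor by halving, since the factor is properly infinite.
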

\begin{proof}
    Let $\mathcal{M}$ be a Type $\III$ factor. Choose two orthogonal, non-zero projections $P_1$ and $P_2$ such that $P_1 + P_2 = 1$. Both $P_1$ and $P_2$ are infinite and Murray-von Neumann equivalent to $1$. Furthermore, we can partition $P_2$ into two mutually orthogonal, equivalent projections: $P_2 = f_1 + f_2$. We can then construct a unital copy of $\mn{2}$ living inside the corner $P_2 \mathcal{M} P_2$, such that $f_1$ and $f_2$ act as the minimal diagonal matrix units (i.e., $f_1 = e_{11}$ and $f_2 = e_{22}$).

    \noindent\textbf{Claim.} $\mathcal{B}$ is regular.

    Because $\mathcal{M}$ is Type $\III$, the projections $P_1$ and $f_1$ are equivalent. There exists a partial isometry $v_0 \in \mathcal{M}$ such that $v_0^* v_0 = P_1$ and $v_0 v_0^* = f_1$.

    Let $u \in \mathcal{U}(P_1 \mathcal{M} P_1)$ be any unitary operator acting on the corner $P_1 \mathcal{M} P_1$. Define $v = v_0 u$.
Observe that $v^*v = u^* P_1 u = P_1$ and $vv^* = v_0 P_1 v_0^* = f_1$.

Let $x \in \mathcal{B}$. Because $\mathcal{B} = \mathbb{C} P_1 \oplus \mn{2} P_2$, we can write $x = \lambda P_1 + y$, where $y \in \mn{2} P_2$. Then:
$$v x v^* = v (\lambda P_1 + y) v^* = \lambda v P_1 v^* + v y v^*=\lambda f_1 = 0 = \lambda f_1$$
Since $f_1$ is the $e_{11}$ matrix unit in $\mn{2} P_2 \subseteq \mathcal{B}$, the element $\lambda f_1$ belongs to $\mathcal{B}$. Thus $vBv^* \subseteq \sB$. Also
$$v^* x v = v^* (\lambda P_1 + y) v = v^* y v$$
Since $v^*$ maps exclusively out of $f_1 = e_{11}$, we have $v^* y v = v^* (e_{11} y e_{11}) v$. Because $e_{11}$ is a minimal projection in $\mn{2}$, $e_{11} y e_{11} = \mu e_{11}$ for some scalar $\mu$.$$v^* x v = v^* (\mu e_{11}) v = \mu P_1$$Clearly, $\mu P_1 \in \mathbb{C} P_1 \subseteq \mathcal{B}$.

Therefore, for every $u \in \mathcal{U}(P_1 \mathcal{M} P_1)$, the partial isometry $v = v_0 u$ belongs to $\grnor{\sM}(\sB)$.

The set of all such $v_0 u$ forms the space $v_0 \, \mathcal{U}(P_1 \mathcal{M} P_1)$. Since the linear span of the unitary group is weakly dense in the von Neumann algebra, the weak closure of this subset of $\grnor{\sM}(\sB)$ is exactly $v_0 (P_1 \mathcal{M} P_1) = f_1 \mathcal{M} P_1$.

By symmetry (using $f_2$), $\grnor{\sM}(\sB)$ also generates $f_2 \mathcal{M} P_1$.
Adding these together, $W^*(\grnor{\sM}(\sB))$ contains $(f_1 + f_2) \mathcal{M} P_1 = P_2 \mathcal{M} P_1$.
Taking adjoints, $W^*(\grnor{\sM}(\sB))$ contains $P_1 \mathcal{M} P_2$. Multiplying these off-diagonal corners generates the diagonal corners $P_1 \mathcal{M} P_1$ and $P_2 \mathcal{M} P_2$. 

Thus, $W^*(\grnor{\sM}(\sB)) = \mathcal{M}$. 

\noindent\textbf{Claim.} $\sB$ is not unitary regular.

Let $U \in \unor{\sM}(\sB)$. Since conjugation by $U$ restricts to an automorphism of the center $\mathcal{Z}(\mathcal{B}) = \mathbb{C} P_1 \oplus \mathbb{C} P_2$, $U P_1 U^*$ must be either $P_1$ or $P_2$. But since $P_1 \sB P_1 \cong \C$ and $P_2 \sB P_2 \cong \mn{2}$, $U P_1 U^*$ cannot equal $P_2$. 

Therefore $U P_1 U^* = P_1$ for all $U \in \unor{\sM}(\sB)$. This implies that every unitary normaliser commutes with the projection $P_1$. Therefore:
$$W^*(\unor{\sM}(\sB)) \subseteq \{P_1\}' \cap \mathcal{M}$$
Because $P_1$ is a non-trivial projection and $\mathcal{M}$ is a factor, $\mathcal{M}$ does not commute with $P_1$.
Therefore, $W^*(\unor{\sM}(\sB)) \subsetneq \mathcal{M}$.
\end{proof}

\subsection{Regularity of Subfactor}\label{subsec:regularity-subfact}

\begin{lem}\label{lem:polar-gr-nor}
    Let $\sB \subseteq \sM$ be a unital inclusion of type $\II_1$-factors and $v \in \grnor{\sM}(\sB)$. Then there exists a partial isometry $w \in \mathcal{B}$ and a local unitary $u \in e\mathcal{M}e$ (where $e = v^*v$) such that $v = wu$ and $u(e\mathcal{B}e)u^* = e\mathcal{B}e$.
\end{lem}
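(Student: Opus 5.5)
Let $e=v^*v$ and $f=vv^*$. By Lemma \ref{lem:spatial-iso-by-gr-nor}, both projections lie in $\sB$ and $v(e\sB e)v^*=f\sB f$. We also have $v^*v=e\le 1$ and $\tau_\sM(e)=\tau_\sM(f)$.

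The plan is to move $f$ back to $e$ using a partial isometry that lives inside $\sB$. Since $\sB$ is a $\II_1$ factor, its unique trace is the restriction of $\tau_\sM$. Hence $\tau_\sB(e)=\tau_\sM(e)=\tau_\sM(f)=\tau_\sB(f)$. In a $\II_1$ factor, projections with equal trace are Murray--von Neumann equivalent, so there is a partial isometry $w\in\sB$ with $w^*w=e$ and $ww^*=f$. This comparison step is the only place where factoriality of $\sB$ is used, and it is the crux of the argument. Without factoriality, $e$ and $f$ need not be equivalent in $\sB$, exactly as in the block-diagonal examples above. Once this equivalence is available, the remaining steps are routine.

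Define $u:=w^*v$. Then $u=e\,u\,e$ lies in $e\sM e$. We check that it is a unitary there:
\[
u^*u=v^*ww^*v=v^*fv=v^*v=e,\qquad uu^*=w^*vv^*w=w^*fw=w^*w=e.
\]
Moreover $wu=ww^*v=fv=v$, which gives the required factorisation $v=wu$.

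It remains to show that $u$ normalises $e\sB e$. Let $x\in e\sB e$. Then
\[
uxu^*=w^*(vxv^*)w.
\]
By Lemma \ref{lem:spatial-iso-by-gr-nor}, $vxv^*\in f\sB f$. Since $w\in\sB$ with $w^*fw=e$, conjugation by $w^*$ carries $f\sB f$ into $e\sB e$, so $uxu^*\in e\sB e$.

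For the reverse inclusion, note that $u^*xu=v^*(wxw^*)v$. Here $wxw^*\in f\sB f$, and $v^*(f\sB f)v\subseteq e\sB e$ by Lemma \ref{lem:spatial-iso-by-gr-nor}. Thus $u^*(e\sB e)u\subseteq e\sB e$.

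Combining the two inclusions with $uu^*=u^*u=e$, we get $e\sB e=u(u^*(e\sB e)u)u^*\subseteq u(e\sB e)u^*$. Together with $u(e\sB e)u^*\subseteq e\sB e$, this yields $u(e\sB e)u^*=e\sB e$.
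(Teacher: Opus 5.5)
Your proof is correct and follows the paper's argument. You use the same partial isometry $w\in\sB$ with $w^*w=e$, $ww^*=f$, obtained from $\tau(e)=\tau(f)$ and uniqueness of the trace on the $\II_1$ factor $\sB$, and the same local unitary $u=w^*v$. The differences are cosmetic: you get $u(e\sB e)u^*=e\sB e$ from two inclusions rather than from the equality $v(e\sB e)v^*=f\sB f$ of Lemma \ref{lem:spatial-iso-by-gr-nor}, and you explicitly check $wu=ww^*v=fv=v$, which the paper leaves implicit.
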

\begin{proof}
    Since $v \in \mathcal{M}$ is a partial isometry, $\tau(v^*v) = \tau(vv^*)$, meaning $\tau(e) = \tau(f)$ where $f=vv^*$. Because $\mathcal{B}$ is itself a $\II_1$ factor, the restriction of $\tau$ to $\mathcal{B}$ is its unique trace, so $e$ and $f$ are Murray-von Neumann equivalent inside $\mathcal{B}$. Thus, there exists a partial isometry $w \in \mathcal{B}$ such that $w^*w = e$ and $ww^* = f$. Define $u = w^*v$. We verify that $u$ is a unitary in the corner $e\mathcal{M}e$:
$$u^*u = v^*ww^*v = v^*fv = e$$$$uu^* = w^*vv^*w = w^*fw = e$$
Furthermore, by Lemma \ref{lem:spatial-iso-by-gr-nor}, we get:
$$u(e\mathcal{B}e)u^* = w^*v(e\mathcal{B}e)v^*w = w^*(f\mathcal{B}f)w = w^* \sB w = w^*w \sB w^*w=e\mathcal{B}e.$$

Since $w \in \mathcal{B}$, we have factored $v = wu$ with the required properties.
\end{proof}

\begin{lem}\label{lem:cut-down}
     Let $\sB \subseteq \sM$ be a unital inclusion of type $\II_1$-factors and $e\in \sM$ be a projection. Let $u \in e\mathcal{M}e$ be a local unitary such that $u(e\mathcal{B}e)u^* = e\mathcal{B}e$. If $p \leq e$ is a projection in $\mathcal{B}$ such that $\tau(p) = \frac{1}{n}$ for some integer $n \geq 2$, then the cut-down $up \in \mathcal{N}:=W^*(\unor{\sM}(\sB))$.
\end{lem}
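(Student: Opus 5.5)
The plan is to show that $up$ is a groupoid normaliser of $\sB$ going from $p$ to another projection of trace $\frac1n$ in $\sB$. We then twist it back into the corner $p\sM p$ using a partial isometry of $\sB$. Finally, we \emph{amplify} it to a global unitary normaliser using a system of $n\times n$ matrix units in $\sB$ whose $(1,1)$ entry is $p$.

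\textbf{Step 1: $up$ is a groupoid normaliser.} Put $x:=up$ and $q:=upu^*$. Since $p\le e$ and $u^*u=uu^*=e$, we get $x^*x=p$ and $xx^*=q$. Because $p\in e\sB e$ and $u(e\sB e)u^*=e\sB e$, we have $q\in e\sB e\subseteq\sB$, and $\tau(q)=\tau(p)=\frac1n$. We check that $x\in\grnor{\sM}(\sB)$. For $b\in\sB$, $pbp\in e\sB e$, so $xbx^*=u(pbp)u^*\in e\sB e\subseteq\sB$. Likewise $x^*bx=p\,u^*(ebe)u\,p\in p(e\sB e)p\subseteq\sB$, using that $u^*$ also normalises $e\sB e$.

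\textbf{Step 2: rotate into the corner $p\sM p$.} Since $\sB$ is a $\II_1$ factor and $\tau(p)=\tau(q)$, there is $w\in\sB$ with $w^*w=q$ and $ww^*=p$. Set $y:=wx$. Then $y^*y=x^*qx=p$ and $yy^*=wqw^*=p$, so $y$ is a unitary of $p\sM p$. Since $w$ and $x$ both normalise $\sB$, so does $y$. By Lemma~\ref{lem:spatial-iso-by-gr-nor}, $y(p\sB p)y^*=p\sB p$.

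\textbf{Step 3: amplify to a global unitary normaliser.} Because $\tau(p)=\frac1n$ and $\sB$ is a $\II_1$ factor, we can split $1-p$ into $n-1$ projections of $\sB$, each of trace $\frac1n$. This gives matrix units $\{w_{ij}\}_{i,j=1}^n\subset\sB$ with $w_{11}=p$ and $\sum_i w_{ii}=1$. Define
\[
U:=\sum_{i=1}^n w_{i1}\,y\,w_{1i}.
\]
A direct computation gives $U^*U=UU^*=\sum_i w_{ii}=1$. For $b\in\sB$, the $(i,j)$ piece of $UbU^*$ is
\[
w_{i1}\,y\,(w_{1i}bw_{j1})\,y^*\,w_{1j},
\]
and $w_{1i}bw_{j1}\in p\sB p$. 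By Step 2 this piece lies in $w_{i1}(p\sB p)w_{1j}\subseteq\sB$. The same argument with $y^*$ gives $U^*\sB U\subseteq\sB$, hence $U\in\unor{\sM}(\sB)$. This verification, which amounts to $U$ acting as $1\otimes y$ under $\sB\cong\mn{n}[p\sB p]$, is the main point to get right.

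\textbf{Step 4: conclude.} Every unitary of $\sB$ lies in $\unor{\sM}(\sB)$, and $\sB$ is the span of its unitaries, so $\sB\subseteq\mathcal N$. Hence $p\in\mathcal N$, and therefore $Up=w_{11}yw_{11}=y\in\mathcal N$. Finally, $w^*y=w^*wx=qx=x$, so
\[
up=x=w^*y\in\mathcal N,
\]
since $w\in\sB\subseteq\mathcal N$.
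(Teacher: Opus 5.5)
Your proof is correct and follows essentially the paper's route: amplify the local normaliser to a global unitary normaliser using $n$ copies of a trace-$\frac1n$ projection in $\sB$, then cut down. The paper uses two families $v_i,z_i$ adapted to $p$ and to $q=upu^*$ and reads off $up=qUp$, whereas you first rotate $q$ back to $p$ by $w\in\sB$, so a single matrix-unit system suffices (your $U$ is $\sum_i z_iuv_i^*$ with $v_i=w_{i1}$, $z_i=w_{i1}w$), and you recover $up=w^*Up$. Like the paper, you tacitly use $e\in\sB$ (to get $q\in\sB$ and $u(p\sB p)u^*\subseteq\sB$); this is not in the lemma as stated ($e\in\sM$), but it holds in the application, where $e=v^*v$ for a groupoid normaliser $v$.
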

\begin{proof}
    Since $u$ normalises $e\mathcal{B}e$ and $p \in e\mathcal{B}e$, the element $q = upu^*$ is also a projection in $e\mathcal{B}e$. Traces are unitarily invariant, so $\tau(q) = \tau(upu^*) = \tau(p) = \frac{1}{n}$. Because $\mathcal{B}$ is a factor and $\tau(p) = \tau(q) = \frac{1}{n}$, we can partition the identity $1_{\mathcal{B}}$ into exactly $n$ copies of $p$, and also into $n$ copies of $q$. Choose mutually orthogonal projections $p_1, \dots, p_n \in \mathcal{B}$ and $q_1, \dots, q_n \in \mathcal{B}$ summing to $1$, along with partial isometries $v_i, z_i \in \mathcal{B}$ such that: $v_1 = p$, $v_i^*v_i = p$, and $v_iv_i^* = p_i$ and $z_1 = q$, $z_i^*z_i = q$, and $z_iz_i^* = q_i$. Define the global element $U \in \mathcal{M}$ by:$$U = \sum_{i=1}^n z_i u v_i^*$$
    
    By orthogonality of the range projections, $U^*U = \sum v_i u^* z_i^* z_i u v_i^* = \sum v_i p v_i^* = \sum p_i = 1$. Symmetrically, $UU^* = \sum q_i = 1$. Thus, $U$ is a unitary in $\mathcal{M}$. We check that $U \in \unor{\sM}(\sB)$. For any $b \in \mathcal{B}$:$$U b U^* = \left( \sum_{i=1}^n z_i u v_i^* \right) b \left( \sum_{j=1}^n v_j u^* z_j^* \right) = \sum_{i,j=1}^n z_i \big( u (v_i^* b v_j) u^* \big) z_j^*$$
    
    Since $v_i^* b v_j \in v_i^*v_i\sB v_j^*v_j=p\mathcal{B}p$, the inner term $u (v_i^* b v_j) u^* \in q\mathcal{B}q$. Consequently, the term $z_i (q\mathcal{B}q) z_j^*$ belongs to $q_i\mathcal{B}q_j \subseteq \mathcal{B}$. Summing these shows $UbU^* \in \mathcal{B}$, so $U \in \unor{\sM}(\sB) \subset \mathcal{N}$. Finally,
    $$q U p = q \left( \sum_{i=1}^n z_i u v_i^* \right) p = z_1 u v_1^* = q u p = up$$Because $q, p \in \mathcal{B} \subseteq \mathcal{N}$ and $U \in \mathcal{N}$, we conclude $up \in \mathcal{N}$.
\end{proof}

\begin{theorem}\label{thm:reg-imply-uni-ii-1-factor}
    Let $\mathcal{B} \subseteq \mathcal{M}$ be a unital inclusion of $\II_1$ factors. If $\mathcal{B}$ is regular in $\mathcal{M}$, then $\mathcal{B}$ is unitarily regular in $\mathcal{M}$.
\end{theorem}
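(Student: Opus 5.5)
The plan is to show $\grnor{\sM}(\sB)\subseteq \mathcal{N}:=W^*(\unor{\sM}(\sB))$. Regularity then gives, via Theorem \ref{thm:spn-nor-equl-gr-nor},
\[
\sM=W^*(\nor{\sM}(\sB))=W^*(\grnor{\sM}(\sB))\subseteq \mathcal{N}\subseteq\sM .
\]
Note that $\sB\subseteq\mathcal{N}$, since $\sU(\sB)\subseteq\unor{\sM}(\sB)$ and $\sB$ is spanned by its unitaries.

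Fix $v\in\grnor{\sM}(\sB)$ with $e=v^*v\in\sB$ (Lemma \ref{lem:spatial-iso-by-gr-nor}). By Lemma \ref{lem:polar-gr-nor}, write $v=wu$ with $w\in\sB$ and $u$ a unitary of $e\sM e$ normalising $e\sB e$. Since $w\in\mathcal{N}$, it suffices to prove $u=ue\in\mathcal{N}$.

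Decompose $e$ into pieces to which Lemma \ref{lem:cut-down} applies. Because $\sB$ is a $\II_1$ factor and $e\in\sB$, $\tau$ takes every value in $[0,\tau(e)]$ on subprojections of $e$ in $\sB$. For $n\ge 2$ with $1/n\le\tau(e)$, write $\tau(e)=m/n+r$ with $0\le r<1/n$. Choose orthogonal projections $p_1,\dots,p_m,p_0\in\sB$ with $\sum_{i=0}^m p_i=e$, $\tau(p_i)=1/n$ for $i\ge1$, and $\tau(p_0)=r$. Lemma \ref{lem:cut-down} gives $up_i\in\mathcal{N}$ for each $i\ge1$, so
\[
u(e-p_0)=\sum_{i\ge1}up_i\in\mathcal{N}.
\]
The remainder satisfies $\|up_0\|_2=\tau(p_0)^{1/2}<n^{-1/2}$. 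Letting $n\to\infty$, we get $u(e-p_0^{(n)})\to u$ in $\|\cdot\|_2$. This is strong convergence of a bounded net in the standard representation, so $u\in\mathcal{N}$. The degenerate case $\tau(e)<1/2$ is covered as well: we simply use $n$ large enough that $1/n\le\tau(e)$. If $e=0$, then $v=0$ and there is nothing to prove.

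I expect the main subtlety to be Lemma \ref{lem:cut-down} itself, namely gluing a local normaliser into a global unitary normaliser, but that is already available. What remains is the approximation step. One could avoid it when $\tau(e)$ is rational, but in general the $\|\cdot\|_2$ (strong operator) closure of $\mathcal{N}$ is needed. The point to check is that strong closure on bounded sets in the GNS representation agrees with weak-operator closure for the von Neumann algebra $\mathcal{N}\subseteq\sM$, which holds because $\mathcal{N}$ is a von Neumann subalgebra.
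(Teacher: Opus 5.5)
Your proof is correct and follows the paper's argument: you reduce to $\grnor{\sM}(\sB)\subseteq W^*(\unor{\sM}(\sB))$, factor $v=wu$ via Lemma \ref{lem:polar-gr-nor}, and apply Lemma \ref{lem:cut-down} to pieces $up$ with $\tau(p)=1/n$. The only difference is in the limiting step: the paper decomposes $e$ once as a countable orthogonal sum $\sum_k p_k$ with unit-fraction traces $\tau(p_k)=1/n_k$ and sums the strongly convergent series $\sum_k up_k$, whereas you use, for each $n$, finitely many pieces of trace $1/n$ plus a remainder of trace $<1/n$ and pass to a bounded $\|\cdot\|_2$-limit, which is equally valid.
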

\begin{proof}
    Let $\mathcal{N} = W^*(\unor{\sM}(\sB))$. It suffices to prove that $\grnor{\sM}(\sB) \subseteq \mathcal{N}$, as this immediately forces $\mathcal{M} = W^*(\grnor{\sM}(\sB)) \subseteq \mathcal{N}$. Let $v \in \grnor{\sM}(\sB)$. Let $e = v^*v$. By Lemma \ref{lem:polar-gr-nor}, we can write $v = wu$, where $w \in \mathcal{B} \subseteq \mathcal{N}$ and $u \in e\mathcal{M}e$ is a local unitary normalising $e\mathcal{B}e$. Our goal is to show $u \in \mathcal{N}$. Because $\mathcal{B}$ is a $\II_1$ factor, its trace takes all values in $[0,1]$. We can express the trace of $e$ as an infinite sum of unit fractions:$$\tau(e) = \sum_{k=1}^\infty \frac{1}{n_k}$$where each $n_k \geq 2$ is an integer. Consequently, we can select a sequence of mutually orthogonal projections $\{p_k\}_{k=1}^\infty$ in $\mathcal{B}$ such that $p_k \leq e$, $\tau(p_k) = \frac{1}{n_k}$, and $\sum_{k=1}^\infty p_k = e$. By Lemma \ref{lem:cut-down}, the cut-down $up_k$ belongs to $\mathcal{N}$ for every $k$. Summing these operators over all $k$ yields:$$u = ue = u \left( \sum_{k=1}^\infty p_k \right) = \sum_{k=1}^\infty u p_k$$Because $\mathcal{N}$ is a von Neumann algebra, it is closed in the strong operator topology. The series $\sum_{k=1}^\infty u p_k$ converges strongly to $u$, which ensures that $u \in \mathcal{N}$. Since $w \in \mathcal{N}$ and $u \in \mathcal{N}$, their product $v = wu$ also belongs to $\mathcal{N}$. Therefore, $\grnor{\sM}(\sB) \subseteq \mathcal{N}$, which implies:$$\mathcal{M} = W^*(\grnor{\sM}(\sB)) \subseteq \mathcal{N} = W^*(\unor{\sM}(\sB)) \subseteq \mathcal{M}$$Thus, $W^*(\unor{\sM}(\sB)) = \mathcal{M}$, proving that $\mathcal{B}$ is unitarily regular. 
\end{proof}

\begin{cor}\label{cor:equi-reg-type-ii-1-subfact}
    Let $\mathcal{B} \subseteq \mathcal{M}$ be a unital inclusion of $\II_1$ factors. Then the followings are equivalent:
    \begin{itemize}
        \item[(i)] $\sB$ is regular in $\sM$.
        \item[(ii)] $\sB$ is groupoid regular in $\sM$.
        \item[(iii)] $\sB$ is unitary regular in $\sM$.
    \end{itemize}
\end{cor}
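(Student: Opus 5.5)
The corollary follows by chaining two results already proved, so the plan is to assemble them into a cycle of implications \((iii)\Rightarrow(ii)\Rightarrow(i)\Rightarrow(iii)\) rather than to introduce any new argument.

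\((iii)\Rightarrow(ii)\): since
\[
\unor{\sM}(\sB)\subseteq\grnor{\sM}(\sB)\subseteq\sM,
\]
monotonicity of generated von Neumann algebras gives
\[
\sM=W^*(\unor{\sM}(\sB))\subseteq W^*(\grnor{\sM}(\sB))\subseteq\sM.
\]

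\((ii)\Rightarrow(i)\): by the inclusion \(\grnor{\sM}(\sB)\subseteq\nor{\sM}(\sB)\), the same monotonicity argument applies. Alternatively, the equivalence \((i)\Leftrightarrow(ii)\) can be quoted directly from Theorem \ref{thm:spn-nor-equl-gr-nor}. That theorem holds for arbitrary unital inclusions of von Neumann algebras, so no factor hypothesis is used in this step.

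\((i)\Rightarrow(iii)\): this is exactly Theorem \ref{thm:reg-imply-uni-ii-1-factor}, whose hypothesis is the same as the corollary's, namely a unital inclusion of \(\II_1\) factors. Internally, that theorem works by first passing from \(\nor{\sM}(\sB)\) to \(\grnor{\sM}(\sB)\) via Theorem \ref{thm:spn-nor-equl-gr-nor}. It then factors each groupoid normaliser as \(v=wu\) (Lemma \ref{lem:polar-gr-nor}) and recovers \(u\) from cut-downs \(up_k\) lying in \(W^*(\unor{\sM}(\sB))\) (Lemma \ref{lem:cut-down}).

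There is no real obstacle at this level, since the substantive content sits in Theorem \ref{thm:reg-imply-uni-ii-1-factor}. The only thing to check is that the hypotheses match. The inclusion must be unital, because \(1\in\sB\) is used to place \(v^*v\) and \(vv^*\) in \(\sB\). Both algebras must be \(\II_1\) factors, because the proof needs the unique trace on \(\sB\) to obtain Murray--von Neumann equivalence inside \(\sB\) and unit-fraction projections \(p_k\le e\) in \(\sB\). Both conditions are given in the statement, so the three conditions are equivalent.
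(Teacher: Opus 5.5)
Your proposal is correct and matches how the paper obtains the corollary, which it states without a separate proof. The paper combines Theorem~\ref{thm:spn-nor-equl-gr-nor} for (i)$\Leftrightarrow$(ii), Theorem~\ref{thm:reg-imply-uni-ii-1-factor} for (i)$\Rightarrow$(iii), and the trivial inclusions $\unor{\sM}(\sB)\subseteq\grnor{\sM}(\sB)\subseteq\nor{\sM}(\sB)$ for the reverse implications, exactly as in your cycle of implications.
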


\begin{prop}\label{prop:reg-type-iii-subfactor}
    Let $\mathcal{B} \subseteq \mathcal{M}$ be a unital inclusion of type $\III$ factors. Then
    \[
    W^*(\nor{\sM}(\sB))=W^*(\grnor{\sM}(\sB)) = W^*(\unor{\sM}(\sB)).
    \]
    Consequently $\sB$ is regular in $\sM$ if and only if it is unitary regular in $\sM$.
\end{prop}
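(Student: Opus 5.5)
By Theorem \ref{thm:spn-nor-equl-gr-nor} we already have $W^*(\nor{\sM}(\sB))=W^*(\grnor{\sM}(\sB))$. Since $\unor{\sM}(\sB)\subseteq\grnor{\sM}(\sB)$, it remains to show $\grnor{\sM}(\sB)\subseteq \mathcal{N}:=W^*(\unor{\sM}(\sB))$. I would follow the strategy of Theorem \ref{thm:reg-imply-uni-ii-1-factor}, replacing trace arguments with the fact that in a (countably decomposable) type $\III$ factor all nonzero projections are equivalent. I would assume $\sigma$-finiteness, or handle the general case by the analogous halving argument.

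Let $v\in\grnor{\sM}(\sB)$ with $e=v^*v$ and $f=vv^*$. By Lemma \ref{lem:spatial-iso-by-gr-nor} both $e$ and $f$ lie in $\sB$. If $v=0$ there is nothing to prove, so assume $e,f\neq 0$.

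\textbf{Step 1 (analogue of Lemma \ref{lem:polar-gr-nor}).} Since $\sB$ is a type $\III$ factor, $e\sim f$ in $\sB$. Choose $w\in\sB$ with $w^*w=e$ and $ww^*=f$, and set $u=w^*v$. Exactly as in Lemma \ref{lem:polar-gr-nor}, $u$ is a unitary in $e\sM e$ with $u(e\sB e)u^*=e\sB e$, and $v=wu$. It therefore suffices to show $u\in\mathcal N$.

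\textbf{Step 2 (analogue of Lemma \ref{lem:cut-down}, the key step).} Here type $\III$ makes things easier than the $\II_1$ case, because no trace bookkeeping is needed. I would split into two cases.

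\emph{Case $e\neq 1$.} Set $e^\perp=1-e\in\sB$, which is nonzero, and note $e^\perp\sim e$ in $\sB$. Choose $s\in\sB$ with $s^*s=e$ and $ss^*=e^\perp$, and define
\[
U=u+s u^* s^*.
\]
This is a unitary: it is $u$ on the corner $e$ and a unitary on $e^\perp$. To check $U\sB U^*\subseteq\sB$, decompose $b\in\sB$ into its four corners $ebe$, $ebe^\perp$, $e^\perp be$, $e^\perp b e^\perp$. Each corner reduces to an element of $e\sB e$ via $s$ (for instance $e b e^\perp = (ebe^\perp s)s^*$ with $ebe^\perp s\in e\sB e$), and these are then conjugated by $u$ or $u^*$. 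Since $u$ and $u^*$ both normalise $e\sB e$, every term stays in $\sB$. The same argument applies to $U^*$, so $U\in\unor{\sM}(\sB)$, and hence $u=UE$-style cut-down $u=Ue\in\mathcal N$ because $e\in\sB\subseteq\mathcal N$.

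\emph{Case $e=1$.} Then $u$ itself is a unitary normaliser and lies in $\mathcal N$ directly.

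\textbf{Step 3.} Combining the steps, $v=wu\in\mathcal N$, so $\grnor{\sM}(\sB)\subseteq\mathcal N$. This gives the chain of equalities in the statement, and the consequence follows immediately.

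The main obstacle is the verification in Step 2 that the off-diagonal corners are mapped into $\sB$. Concretely, one computes
\[
u(ebe^\perp)s u s^* = u\,(ebe^\perp s)\,u\, s^*.
\]
Here $ebe^\perp s\in e\sB e$, but the expression places a $u$ on both sides, so it is not a conjugation and need not land in $e\sB e$. The first fix I would try is the alternative choice
\[
U=u+s u s^*,
\]
which makes every corner a genuine conjugation by $u$ (transported through $s$). With this choice $UbU^*$ becomes
\[
\begin{pmatrix}1&0\\0&s\end{pmatrix}\,(u\oplus u)\,(\cdot)\,(u\oplus u)^*\begin{pmatrix}1&0\\0&s^*\end{pmatrix}
\]
applied to the $2\times 2$ matrix of $e\sB e$-entries obtained from $b$ through $s$. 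All resulting entries lie in $\sB$.
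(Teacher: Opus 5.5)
Your argument is correct, provided Step 2 is written with $U=u+sus^*$ from the outset. Relative to the matrix units $e, s, s^*, 1-e\in\sB$, your first choice $u+su^*s^*$ is $\operatorname{diag}(u,u^*)$, and it fails as you noticed. With $U=u+sus^*$, the identification $\sM\cong\mn{2}[e\sM e]$ restricts to $\sB\cong\mn{2}[e\sB e]$ and sends $U$ to $\operatorname{diag}(u,u)$. Hence $U\sB U^*=\sB$, and $Ue=u$ because $s^*e=0$; this is the cleanest way to present your verification.

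Your route differs from the paper's. The paper uses proper infiniteness of $\sB$ directly. Since $e\sim 1\sim f$ in $\sB$, it picks isometries $w_1,w_2\in\sB$ with $w_1w_1^*=e$ and $w_2w_2^*=f$. It then checks that $U=w_2^*vw_1$ is itself a unitary normaliser and recovers $v=w_2Uw_1^*$; this is the same trick as Case I of Theorem \ref{thm:type-ii-infty-case}. That construction is a single step, with no case split and no intermediate local unitary.

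You instead factor $v=wu$ as in Lemma \ref{lem:polar-gr-nor} and then extend $u$ to a global normaliser as in Lemma \ref{lem:cut-down} with $n=2$. You use only $e\sim f$ and $e\sim 1-e$ in $\sB$, rather than equivalence with $1$. This makes the parallel with the $\II_1$ proof transparent: the unit-fraction bookkeeping disappears because every nonzero $e\neq 1$ already satisfies $e\sim 1-e$. The cost is a separate case $e=1$ and a longer verification.

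Both arguments need $\sB$ to be $\sigma$-finite, so that all nonzero projections of $\sB$ are equivalent. You state this explicitly, whereas the paper uses it tacitly. Your aside that the general case follows by an analogous halving argument is unsupported, since without $\sigma$-finiteness $e\sim 1-e$ can fail. The paper's argument breaks down there equally, so this does not distinguish the two approaches.
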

\begin{proof}
    Let $v \in \grnor{\sM}(\sB)$. Then $e= v^*v, f:=vv^* \in \sB$ (by Lemma \ref{lem:spatial-iso-by-gr-nor}). Because $\mathcal{B}$ is a Type $\III$ factor, $e$ and $f$ are equivalent to $1$ inside $\mathcal{B}$. Therefore, there exist isometries $w_1, w_2 \in \mathcal{B}$ such that:
    \begin{align*}
        w_1^* w_1 = 1 \quad &\text{and} \quad w_1 w_1^* = e, \\
        w_2^* w_2 = 1 \quad &\text{and} \quad w_2 w_2^* = f
    \end{align*} 
    Define $U:= w_2^* v w_1\in \sM$. Then $U$ is a unitary:
    $$U^* U = w_1^* v^* w_2 w_2^* v w_1 = w_1^* (v^* f v) w_1 = w_1^* e w_1 = 1$$
    $$U U^* = w_2^* v w_1 w_1^* v^* w_2 = w_2^* (v e v^*) w_2 = w_2^* f w_2 = 1$$
    For any $b \in \mathcal{B}$:
    $$U b U^* = w_2^* \big( v (w_1 b w_1^*) v^* \big) w_2$$
    
    Since $w_1 \in \mathcal{B}$, the term $w_1 b w_1^* \in e\mathcal{B}e$. Because $v$ normalises $\mathcal{B}$, $v(e\mathcal{B}e)v^* = f\mathcal{B}f$ (by Lemma \ref{lem:spatial-iso-by-gr-nor}). Finally, multiplying by $w_2^*$ and $w_2$ maps this into $1\mathcal{B}1 = \mathcal{B}$. Thus, $U \in \unor{\sM}(\sB)$.
    
    Finally, we can recover $v$ using elements of $\mathcal{B}$ and $U$:$$w_2 U w_1^* = w_2 w_2^* v w_1 w_1^* = f v e = v$$

    Therefore every partial isometry $v \in \grnor{\sM}(\sB)$ is generated by $\unor{\sM}(\sB)$ and $\mathcal{B}$. Therefore, $W^*(\grnor{\sM}(\sB)) = W^*(\unor{\sM}(\sB))$.
\end{proof}

\begin{theorem}\label{thm:type-ii-infty-case}
    Let $\mathcal{M}$ be a $\sigma$-finite Type $\II_\infty$ factor, and let $\mathcal{B} \subseteq \mathcal{M}$ be a von Neumann subfactor which is also of Type $\II_\infty$.  Then
    \[
    W^*(\nor{\sM}(\sB))=W^*(\grnor{\sM}(\sB)) = W^*(\unor{\sM}(\sB)).
    \]
    Consequently $\sB$ is regular in $\sM$ if and only if it is unitary regular in $\sM$.
\end{theorem}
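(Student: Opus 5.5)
By Theorem \ref{thm:spn-nor-equl-gr-nor} we already have $W^*(\nor{\sM}(\sB)) = W^*(\grnor{\sM}(\sB))$, and $\unor{\sM}(\sB)\subseteq \grnor{\sM}(\sB)$ gives one inclusion. So, writing $\mathcal{Q}:=W^*(\unor{\sM}(\sB))$, it suffices to show $\grnor{\sM}(\sB)\subseteq \mathcal{Q}$. Fix $v\in\grnor{\sM}(\sB)$ with $e=v^*v$ and $f=vv^*$. By Lemma \ref{lem:spatial-iso-by-gr-nor}, $e,f\in\sB$ and $v(e\sB e)v^*=f\sB f$. The plan is to split according to whether $e$ is finite or infinite in $\sB$, and combine the type $\III$ trick of Proposition \ref{prop:reg-type-iii-subfactor} with the cut-down trick of Lemma \ref{lem:cut-down}.

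\textbf{Case 1: $e$ and $f$ infinite in $\sB$.} This holds for $e$ iff it holds for $f$, since $e\sB e\cong f\sB f$ via $v$. Since $\sB$ is a $\sigma$-finite $\II_\infty$ factor ($\sigma$-finiteness passes from $\sM$ to $\sB$), any two infinite projections in $\sB$ are equivalent to $1$ in $\sB$. Hence there are isometries $w_1,w_2\in\sB$ with $w_1w_1^*=e$ and $w_2w_2^*=f$. Exactly as in Proposition \ref{prop:reg-type-iii-subfactor}, $U=w_2^*vw_1$ is a unitary normalising $\sB$, and $v=w_2Uw_1^*\in\mathcal{Q}$.

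\textbf{Case 2: $e$ and $f$ finite in $\sB$.} Fix a faithful normal semifinite trace $\tau$ on $\sM$. Its restriction to $\sB$ is a semifinite trace, because $\sB$ is a $\II_\infty$ factor and finite projections of $\sB$ have finite $\tau$-value. Since $e$ and $f$ are equivalent in $\sM$, $\tau(e)=\tau(f)<\infty$, so $e\sim f$ in $\sB$. Pick $w\in\sB$ with $w^*w=e$ and $ww^*=f$, and set $u=w^*v$. As in Lemma \ref{lem:polar-gr-nor}, $u$ is a unitary of $e\sM e$ normalising $e\sB e$, and it suffices to show $u\in\mathcal{Q}$. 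Here Case 2 actually reduces to Case 1: choose a projection $g\in\sB$ orthogonal to $e$ with $g$ infinite, which is possible since $1-e$ is infinite. Then $u+g$ is a partial isometry with initial and final projection $e+g$, which is infinite. It normalises $\sB$: for $b\in\sB$,
\[
(u+g)b(u+g)^*=ueb eu^*+ueb g+gbeu^*+gbg,
\]
and the cross terms $ue\,bg$ are the problem, since they need not lie in $\sB$.

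So instead I would adapt Lemma \ref{lem:cut-down}. Decompose $1=\sum_{i} p_i$ in $\sB$ into countably many projections each equivalent to $e$ in $\sB$, which is possible since $\tau$ restricted to $\sB$ is semifinite, $\sB$ is $\sigma$-finite, and all finite nonzero projections are comparable. Choose partial isometries $v_i\in\sB$ with $v_i^*v_i=e$, $v_iv_i^*=p_i$, and $v_1=e$. Then define
\[
U=\sum_i v_i u v_i^*.
\]
This is a strongly convergent sum because the $v_i u v_i^*$ have orthogonal ranges and supports. It is a unitary, and the computation of Lemma \ref{lem:cut-down} (now with $q=e$, since $ueu^*=e$) shows $UbU^*=\sum_{i,j}v_i\,u(v_i^*bv_j)u^*\,v_j^*\in\sB$. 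Hence $U\in\unor{\sM}(\sB)$ and $u=eUe\in\mathcal{Q}$.

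The main obstacle is making this last decomposition rigorous in $\sB$. In a $\II_\infty$ factor, $1$ is a countable orthogonal sum of copies of $e$ only when that is compatible with the trace. I expect it is fine in general: by $\sigma$-finiteness, $1\sim\bigoplus_{\mathbb{N}}e$ for any nonzero finite $e$, by halving and comparison. One must also check that the infinite sums defining $UbU^*$ converge $\sigma$-weakly and stay in $\sB$, which follows since $\sB$ is $\sigma$-weakly closed and each partial sum lies in $\sB$. With both cases done, $v=wu\in\mathcal{Q}$ in Case 2, and $v\in\mathcal{Q}$ in Case 1. This gives $\grnor{\sM}(\sB)\subseteq\mathcal{Q}$ and the stated equalities, and the final equivalence of regularity and unitary regularity follows immediately.
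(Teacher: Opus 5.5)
Your overall plan matches the paper's: reduce to $\grnor{\sM}(\sB)\subseteq W^*(\unor{\sM}(\sB))$ and split according to whether $e$ is infinite or finite in $\sB$. Case~1 is correct; your $U=w_2^*vw_1$, $v=w_2Uw_1^*$ is the right formula. Deducing ``$e$ finite iff $f$ finite'' from $e\sB e\cong f\sB f$ is also the right justification.

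\textbf{The gap is in Case~2}, where you produce $w\in\sB$ with $w^*w=e$ and $ww^*=f$. The claim that $\tau|_{\sB}$ is semifinite, because finite projections of $\sB$ have finite $\tau$-value, is false. For $\sB\otimes 1\subseteq\sB\bar\otimes B(\ell^2)$, the trace of $\sM$ is $+\infty$ on every nonzero projection of $\sB\otimes 1$, so $\tau(e)=\tau(f)$ carries no information.

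Worse, the conclusion $e\sim_{\sB}f$ itself can fail. Let $R$ be the hyperfinite $\II_1$ factor, $\sB=R\bar\otimes B(\ell^2)$, $\theta\in\operatorname{Aut}(\sB)$ scaling the trace by $t\neq 1$, and $\sM=(\sB\bar\otimes\sB)\rtimes_{\theta\otimes\theta^{-1}}\mathbb{Z}$.
\begin{itemize}
\item The action preserves $\tau_{\sB}\otimes\tau_{\sB}$.
\item $\theta^n\otimes\theta^{-n}$ is outer for $n\neq 0$: a slice-map argument shows that $\alpha\otimes\beta$ inner forces $\alpha$ inner, while $\theta^n$ scales the trace.
\item Hence $\sM$ is a $\II_\infty$ factor with separable predual, containing $\sB\otimes 1$ as a $\II_\infty$ subfactor.
\end{itemize}
Let $\lambda_1$ be the implementing unitary and $e\in\sB$ a nonzero finite projection. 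Then $v=\lambda_1(e\otimes 1)\in\grnor{\sM}(\sB\otimes 1)$ has $v^*v=e\otimes 1$ and $vv^*=\theta(e)\otimes 1$. Since $\tau_{\sB}(\theta(e))=t\,\tau_{\sB}(e)\neq\tau_{\sB}(e)$, these are not equivalent in $\sB\otimes 1$. So the factorisation $v=wu$, with $w\in\sB$ and $u$ a unitary of $e\sM e$, is not available in general.

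By contrast, the step you flagged as the main obstacle is routine. Writing $1=e+\sum_{i\geq 2}p_i$ with $p_i\sim_{\sB}e$ follows from maximality, comparison and $\sigma$-finiteness inside $\sB$, and needs no trace on $\sM$.

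\textbf{The fix:} you never need $e\sim_{\sB}f$; decompose $1$ twice.
\begin{itemize}
\item Take $1=\sum_i p_i$ with $v_i\in\sB$, $v_i^*v_i=e$, $v_iv_i^*=p_i$, $v_1=e$.
\item Independently take $1=\sum_i q_i$ with $z_i\in\sB$, $z_i^*z_i=f$, $z_iz_i^*=q_i$, $z_1=f$.
\end{itemize}
Then $U=\sum_i z_i\,v\,v_i^*$ is a unitary, and
\begin{itemize}
\item $UbU^*=\sum_{i,j}z_i\,v(v_i^*bv_j)v^*\,z_j^*\in\sB$;
\item symmetrically $U^*bU=\sum_{i,j}v_i\,v^*(z_i^*bz_j)v\,v_j^*\in\sB$;
\item $fUe=v$.
\end{itemize}
This is exactly the paper's Case~II. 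Your $U=\sum_i v_iuv_i^*$ is the special case $z_i=v_iw^*$, which is available only when $w$ exists. The paper's text asserts the same property of $\tau_{\sM}|_{\sB}$, but its construction does not depend on it. With this change your argument goes through.
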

\begin{proof}
    Let $v \in \grnor{\sM}(\sB)$. Let $e = v^*v$ and $f = vv^*$. Then by Lemma \ref{lem:spatial-iso-by-gr-nor}, $e, f \in \sB$ and $v (e\mathcal{B}e) v^* = f\mathcal{B}f$.

    Let $\tau_{\mathcal{M}}$ be the unique faithful, normal, semi-finite trace on $\mathcal{M}$. Because $v \in \mathcal{M}$ is a partial isometry, $e \sim_{\mathcal{M}} f$, which implies $\tau_{\mathcal{M}}(e) = \tau_{\mathcal{M}}(f)$. Since $\mathcal{B}$ is also a Type $\II_\infty$ factor, the restriction $\tau_{\mathcal{B}} = \tau_{\mathcal{M}}\vert{}_{\mathcal{B}}$ acts as the unique faithful, normal, semi-finite trace on $\mathcal{B}$. Thus:
$$\tau_{\mathcal{B}}(e) = \tau_{\mathcal{B}}(f)$$
Therefore, $e$ and $f$ are equivalent inside the subfactor $\mathcal{B}$ ($e \sim_{\mathcal{B}} f$). Because $\mathcal{B}$ is a factor, any projection in $\mathcal{B}$ is either finite or infinite. We divide the proof into two exhaustive cases based on the finiteness of $e$ (and consequently $f$).

\noindent \textbf{Case I.} $e$ and $f$ are infinite projections.

In a $\sigma$-finite Type $\II_\infty$ factor, any projection with infinite trace is Murray-von Neumann equivalent to the identity projection $1_{\mathcal{B}}$. Thus, $e \sim_{\mathcal{B}} 1_{\mathcal{B}}$ and $f \sim_{\mathcal{B}} 1_{\mathcal{B}}$.

There exist isometries $w_1, w_2 \in \mathcal{B}$ such that:
$$w_1^* w_1 = 1_{\mathcal{B}}, \quad w_1 w_1^* = e$$
$$w_2^* w_2 = 1_{\mathcal{B}}, \quad w_2 w_2^* = f$$

Define the operator $U \in \mathcal{M}$ as $U = w_2 v w_1^*$. Then $U$ is unitary:
$$U^* U = w_1 v^* w_2^* w_2 v w_1^* = w_1 (v^* f v) w_1^* = w_1 e w_1^* = w_1 w_1^* w_1 w_1^* = 1_{\mathcal{B}}$$
$$U U^* = w_2 v w_1^* w_1 v^* w_2^* = w_2 (v e v^*) w_2^* = w_2 f w_2^* = 1_{\mathcal{B}}$$

Let $x \in \mathcal{B}$.$$U x U^* = w_2 \big( v (w_1^* x w_1) v^* \big) w_2^*$$Since $w_1 \in \mathcal{B}$, $w_1^* x w_1 \in e\mathcal{B}e$. Because $v \in \grnor{\sM}(\sB)$, $v(e\mathcal{B}e)v^* = f\mathcal{B}f$. Thus, $v(w_1^* x w_1)v^* \in f\mathcal{B}f$. Finally, conjugating by $w_2 \in \mathcal{B}$ maps $f\mathcal{B}f$ into $1_{\mathcal{B}}\mathcal{B}1_{\mathcal{B}} = \mathcal{B}$. So $U\mathcal{B}U^* \subseteq \mathcal{B}$. By symmetry, $U^*\mathcal{B}U \subseteq \mathcal{B}$, so $U$ is a unitary normalizer, that is, $U \in \unor{\sM}(\sB)$.

Note that: $$w_2^* U w_1 = w_2^* w_2 v w_1^* w_1 = f v e = v$$

Since $w_1, w_2 \in \mathcal{B}$ and $U \in \unor{\sM}(\sB)$, we conclude $v \in W^*(\unor{\sM}(\sB))$.

\noindent\textbf{Case II.}  $e$ and $f$ are finite projections.

Choose a sequence of mutually orthogonal projections $\{e_i\}_{i=1}^\infty \subset \mathcal{B}$ such that $e_1 = e$, $\sum_{i=1}^\infty e_i = 1_{\mathcal{B}}$, and $e_i \sim_{\mathcal{B}} e$ for all $i$. Similarly, choose $\{f_i\}_{i=1}^\infty \subset \mathcal{B}$ such that $f_1 = f$, $\sum_{i=1}^\infty f_i = 1_{\mathcal{B}}$, and $f_i \sim_{\mathcal{B}} f$ for all $i$. Let $z_i \in \mathcal{B}$ be partial isometries such that $z_i^* z_i = f$ and $z_i z_i^* = f_i$ (with $z_1 = f$).

Define $U \in \mathcal{M}$ as the strongly convergent sum:
$$U = \sum_{i=1}^\infty z_i v u_i^*$$
We have:
$$U^* U = \sum_{i=1}^\infty u_i v^* z_i^* z_i v u_i^* = \sum_{i=1}^\infty u_i (v^* f v) u_i^* = \sum_{i=1}^\infty u_i e u_i^* = \sum_{i=1}^\infty e_i = 1_{\mathcal{B}}$$
By identical logic, $U U^* = \sum_{i=1}^\infty f_i = 1_{\mathcal{B}}$.

Let $x \in \mathcal{B}$.$$U x U^* = \sum_{i,j=1}^\infty z_i \big( v (u_i^* x u_j) v^* \big) z_j^*$$ 
Observe the inner term $u_i^* x u_j$. Since $u_i^* = e u_i^*$ and $u_j = u_j e$, this term strictly belongs to $e\mathcal{B}e$. Now we have $v (e\mathcal{B}e) v^* = f\mathcal{B}f$. Thus, $v(u_i^* x u_j)v^* = y_{ij}$ for some $y_{ij} \in f\mathcal{B}f$. Thus $z_i y_{ij} z_j^* \in z_i f \mathcal{B} f z_j^* = f_i \mathcal{B} f_j \subseteq \mathcal{B}$.
Because $\mathcal{B}$ is a von Neumann algebra, it is closed in the strong operator topology, meaning the infinite sum $U x U^*$ converges to an element in $\mathcal{B}$. Thus $U\mathcal{B}U^* \subseteq \mathcal{B}$. By symmetry, $U \in \unor{\sM}(\sB)$.

Multiplying $U$ by the local projections isolates $v$:$$f_1 U e_1 = f \left( \sum_{i=1}^\infty z_i v u_i^* \right) e = z_1 v u_1^* = f v e = v$$Since $e_1, f_1 \in \mathcal{B}$ and $U \in \unor{\sM}(\sB)$, we conclude $v \in W^*(\unor{\sM}(\sB))$.

Thus, $W^*(\grnor{\sM}(\sB)) \subseteq W^*(\unor{\sM}(\sB))$.
\end{proof}

\begin{theorem}\label{thm:type-i-infty-reg-equi}
    Let $\mathcal{M}$ be a Type $\I_\infty$ factor, and let $\mathcal{B} \subseteq \mathcal{M}$ be a von Neumann subfactor of Type $\I_\infty$. If $\mathcal{B}$ is regular in $\mathcal{M}$, then $\mathcal{B}$ is unitarily regular in $\mathcal{M}$.
\end{theorem}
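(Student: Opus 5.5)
The plan is to follow the same strategy as in Theorem~\ref{thm:reg-imply-uni-ii-1-factor}. Set $\sN := W^*(\unor{\sM}(\sB))$. By Theorem~\ref{thm:spn-nor-equl-gr-nor}, regularity gives $\sM = W^*(\grnor{\sM}(\sB))$, so it suffices to show $\grnor{\sM}(\sB)\subseteq \sN$. The main tool is the tensor structure of a type $\I$ subfactor. Write $\sM = B(H)$. Since $\sB$ is a type $\I$ factor, there is a decomposition $H \cong K\otimes L$ under which
\[
\sB = B(K)\otimes 1,\qquad \sB'\cap \sM = 1\otimes B(L).
\]
Two facts about $\sN$ are immediate. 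First, $\sB\subseteq \sN$, since every unitary of $\sB$ normalises $\sB$. Second, every unitary of $\sB'\cap\sM$ normalises $\sB$, so $1\otimes B(L)\subseteq \sN$.

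Now fix $v\in\grnor{\sM}(\sB)$ and put $e=v^*v$, $f=vv^*$. By Lemma~\ref{lem:spatial-iso-by-gr-nor}, $e,f\in\sB$ and $v$ implements a $*$-isomorphism $e\sB e\cong f\sB f$.
\begin{enumerate}
\item[(a)] Write $e=e_0\otimes 1$ and $f=f_0\otimes 1$ with $e_0,f_0$ projections in $B(K)$. Then $e\sB e\cong B(e_0K)$ and $f\sB f\cong B(f_0K)$. An isomorphism of type $\I$ factors preserves the dimension of the underlying Hilbert space, since that dimension is the cardinality of a maximal family of orthogonal minimal projections. Hence $\dim e_0K=\dim f_0K$, so $e\sim_{\sB} f$. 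This works for arbitrary cardinals, so no $\sigma$-finiteness is needed.
\item[(b)] Choose $w\in\sB$ with $w^*w=e$ and $ww^*=f$, and set $u=w^*v$. Exactly as in Lemma~\ref{lem:polar-gr-nor}, $u$ is a unitary of $e\sM e$ with $u(e\sB e)u^*=e\sB e$, and $v=wu$. This step only uses $e\sim_\sB f$ and Lemma~\ref{lem:spatial-iso-by-gr-nor}, not the trace.
\end{enumerate}

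The key step, and the one I expect to need the most care, is to identify $u$. We have
\[
e\sM e = B(e_0K\otimes L),\qquad e\sB e = B(e_0K)\otimes 1 .
\]
Conjugation by $u$ is an automorphism of the type $\I$ factor $B(e_0K)$, and every automorphism of $B(e_0K)$ is inner. So there is a unitary $s\in B(e_0K)$ with
\[
u(x\otimes 1)u^*=(sxs^*)\otimes 1\quad\text{for all } x\in B(e_0K).
\]
Then $(s^*\otimes 1)u$ commutes with $e\sB e$. By the commutation theorem for tensor products, the relative commutant of $e\sB e$ in $e\sM e$ is $e_0\otimes B(L)$. Hence $(s^*\otimes 1)u = e_0\otimes t$ for a unitary $t\in B(L)$.

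Extend $s$ to the partial isometry $\tilde s := s\oplus 0$ on $K$. Then $\tilde s\otimes 1\in\sB$. The operator $T:=1\otimes t$ is a unitary in $\sB'\cap\sM$, so $T\in\unor{\sM}(\sB)$. This gives
\[
u=(\tilde s\otimes 1)(1\otimes t)(e_0\otimes 1)=(\tilde s\otimes 1)\,T\,e\in\sN .
\]
Therefore $v=wu\in\sN$. This holds for every $v\in\grnor{\sM}(\sB)$, so
\[
\sM=W^*(\grnor{\sM}(\sB))\subseteq\sN\subseteq\sM,
\]
and $\sB$ is unitarily regular in $\sM$. The argument in fact shows $W^*(\grnor{\sM}(\sB))=W^*(\unor{\sM}(\sB))$ without assuming regularity.
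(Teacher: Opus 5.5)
Your argument is correct, and it takes a different route from the paper's. The paper obtains $e\sim_{\sB} f$ as in your step (a) and then defers to the type $\II_\infty$ proof. There it extends $v$ to a \emph{global} unitary normaliser $U$ of $\sB$, in one of two ways: via isometries in $\sB$ onto $e$ and $f$ when $e$ is infinite, or by tiling $1$ with copies of $e$ and of $f$ and summing $\sum_i z_i v u_i^*$ when $e$ is finite. It then recovers $v$ by compressing $U$ with elements of $\sB$. You never extend $v$. Instead you use the splitting $\sB=B(K)\otimes 1$, $\sB'\cap\sM=1\otimes B(L)$ and the innerness of automorphisms of $B(e_0K)$ to write the local unitary as $u=(\tilde s\otimes 1)\,T\,e$ with $T\in\mathcal{U}(\sB'\cap\sM)$. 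This avoids the finite/infinite case split and is valid for Hilbert spaces of any dimension. The paper's dichotomy (``infinite $\Rightarrow$ equivalent to $1$'') tacitly needs $\sB$ to be $\sigma$-finite, since in a non-separable $B(K)$ an infinite projection of rank $<\dim K$ is not equivalent to $1$. The paper's route, on the other hand, is uniform with the $\II_1$, $\II_\infty$ and $\III$ cases, where no tensor splitting is available. Note also that your two preliminary observations already finish the proof: $W^*(\unor{\sM}(\sB))\supseteq (B(K)\otimes 1)\vee(1\otimes B(L))=B(K\otimes L)=\sM$, so steps (a), (b) and the key step are unnecessary. More generally, for any subfactor $\sB$ of $\sM=B(H)$ one has $\mathcal{U}(\sB)\cup\mathcal{U}(\sB')\subseteq\unor{\sM}(\sB)$ and $\sB\vee\sB'=\sZ(\sB)'=B(H)$. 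Hence every subfactor of a type $\I$ factor is unitarily regular and the regularity hypothesis in the statement is superfluous, which is a stronger conclusion than your closing remark.
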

\begin{proof}
    Let $v \in \grnor{\sM}(\sB)$ be a groupoid normalizer with initial projection $e = v^*v$ and final projection $f = vv^*$. By Lemma \ref{lem:spatial-iso-by-gr-nor}, $e, f \in \mathcal{B}$, and conjugation by $v$ implements a spatial von Neumann algebra isomorphism between the reduced corners $e\mathcal{B}e$ and $f\mathcal{B}f$.
    
    Because $\mathcal{B}$ is a Type $\I_\infty$ factor (isomorphic to $\mathcal{B}(\mathcal{H})$), the algebraic isomorphism $e\mathcal{B}e \cong f\mathcal{B}f$ implies that the projections $e$ and $f$ have the exact same rank within $\mathcal{B}$. Consequently, $e$ and $f$ are  Murray-von Neumann equivalent inside the subfactor, i.e., $e \sim_{\mathcal{B}} f$.
    
    Because $e$ and $f$ are equivalent inside $\mathcal{B}$, the remainder of the proof follows exactly as in the Type $\II_\infty$ case (see Theorem \ref{thm:type-ii-infty-case}). By separating into the cases where $e$ is an infinite projection or a finite-dimensional projection, we can construct a global normalizer $U \in \unor{\sM}(\sB)$ such that $v \in W^*(U, \mathcal{B})$. Thus, $W^*(\grnor{\sM}(\sB)) = W^*(\unor{\sM}(\sB))$.
\end{proof}

%% file: sections/simple_case.tex
The purpose of this section is to study the notions of regularity and unitary regularity for a unital inclusion of simple $C^*$-algebras $\sB \subseteq \sA$ which is \emph{irreducible}, meaning that $\sB'\cap \sA = \C$. We show that, in this setting, the two notions of regularity coincide for $\sB$.

For a unital inclusion $\sB \subseteq \sA$ of $C^*$-algebras, the notions of normalisers (denoted by $\nor{\sA}(\sB)$) and unitary normalisers (denoted by $\unor{\sA}(\sB)$) are analogous to those in the setting of von Neumann algebras (see \S\ref{sec:prelims}). We say that $\sB$ is regular in $\sA$ if $C^*(\nor{\sA}(\sB)) = \sA$, and that $\sB$ is unitary regular in $\sA$ if $C^*(\unor{\sA}(\sB)) = \sA$.

In \cite{Exel2011}, Exel introduced the notion of Cartan subalgebras in the context of $C^*$-algebras. We recall the relevant definitions below.

\begin{definition}[{\cite[Definition 9.2]{Exel2011}}]
  Let $\sB\subseteq \sA$ be a inclusion of $C^*$-algebras. A virtual commutant of $\sB$ in $\sA$ is an $\sA$-valued linear map $\varphi$ defined on a closed two sided ideal $\sJ$ of $\sB$ such that,
  \begin{itemize}
      \item[(i)] $\varphi(bx)= b\varphi (x)$
      \item[(ii)]$\varphi(xb)= \varphi (x)b$
  \end{itemize}
  for all $x\in \sJ$ and $b\in \sB$.
  \end{definition}

\begin{definition}[{\cite[Definition 9.6]{Exel2011}}]
     A subalgebra $\sB$ satisfies the property \maxprime, if the range of any virtual commutant of $\sB$ in $\sA$ is contained in $\sB$.  
\end{definition}

\begin{definition}[{\cite[Definition 12.1]{Exel2011}}]\label{def:gen_cartan}
  Let $\sA$ be a $C^*$-algebra and $\sB\subseteq \sA$ be an inclusion of $C^*$-subalgebra. We say that $\sB$ is a \emph{generalized Cartan subalgebra} of $\sA$ if it satisfies the following properties:
  \begin{itemize}
      \item[(i)] $\sB$ contains an approximate unit for $\sA$.
      \item[(ii)] $\sB$ satisfies the \maxprime.
      \item[(iii)] $\sB$ is regular in $\sA$.
      \item[(iv)] There exists a faithful conditional expectation $E: \sA \to \sB$. 
  \end{itemize}
\end{definition}

\begin{definition}[\cite{Exel2011}]
    A closed linear space $\sL$ contained in $\nor{\sA}(\sB)$ such that $\sL\sB, \sB\sL \subseteq \sL$ is called a \emph{slice} of the inclusion $\sB\subseteq\sA$. We denote the set of all slices for the inclusion $\sB \subseteq \sA$ by $\sS_{\sB\subseteq \sA}$.
\end{definition}

\begin{comment}

\begin{theorem}
    There exists a unital inclusion $\sB \subseteq \sA$ of simple $C^*$-algebras and a conditional expectation $E: \sA \to \sB$ of finite Watatani index such that $\sB$ is regular in $\sA$ but it is neither groupoid regular nor unitary regular in $\sA$.
\end{theorem}

\begin{proof}
    Let $\sB$ be a simple, unital, projectionless $C^*$-algebra (e.g., constructed via Blackadar's methods) with $K_0(\sB) \cong \mathbb{Z}^2$ and order unit $[1_B] = (1, 0)$. Note that for such $\sB$, groupoid regularity and unitary regularity will coincide!

    Let $\beta \in \operatorname{Aut}(K_0(\sB))$ be the order-2 automorphism defined by $\beta(x, y) = (y, x)$. This lifts to an element $[X] \in \operatorname{Pic}(\sB)$ corresponding to an invertible $\sB$-$\sB$ equivalence bimodule $X$. Notice
    \begin{itemize}
        \item[(i)]  Because $\beta^2 = \text{id}$, $X \otimes_\sB X \cong \sB$ (it has order 2).
        \item[(ii)] Because $\beta(1, 0) = (0, 1) \neq [1_\sB]$, the class $[X] \neq [1_\sB]$ in $K_0(\sB)$. Therefore, $X$ is not a free module, meaning $[X] \notin \operatorname{Out}(\sB)$.
    \end{itemize}

    Define $\sA$ as the $C^*$-algebraic Fell bundle over $\mathbb{Z}_2$ generated by $\sB$ and $X$:
    $$\sA = \sB \oplus X$$

    Let $E: \sA \to \sB$ be the canonical projection onto the degree-zero component, $E(b + x) = b$
\end{proof}

\end{comment}

\begin{prop}
Let $\sB \subseteq \sA$ be a unital inclusion of simple $C^*$-algebras. Assume that the inclusion is unitary-regular and irreducible (i.e., $\sB' \cap \sA = \mathbb C$), and that there exists a faithful conditional expectation
\[
E \colon \sA \to \sB.
\]
Then $B$ is a generalised Cartan subalgebra of $A$ and in this case $\sS_{\sB\subseteq \sA}\setminus \{0\}$ is a group.
\end{prop}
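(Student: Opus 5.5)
We verify the four conditions of Definition \ref{def:gen_cartan} one at a time, and then describe the slices explicitly. Condition (i) is immediate because the inclusion is unital: $1_\sA\in\sB$ serves as an approximate unit for $\sA$. Condition (iv) is part of the hypotheses. Condition (iii) holds because $\unor{\sA}(\sB)\subseteq\nor{\sA}(\sB)$, so unitary regularity gives $\sA=C^*(\unor{\sA}(\sB))\subseteq C^*(\nor{\sA}(\sB))\subseteq\sA$. The only substantial point is therefore property \maxprime.

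For \maxprime, let $\varphi\colon\sJ\to\sA$ be a virtual commutant defined on a closed two-sided ideal $\sJ\subseteq\sB$. Since $\sB$ is simple, either $\sJ=0$, which is trivial, or $\sJ=\sB$. In the second case set $a:=\varphi(1)\in\sA$. For every $b\in\sB$ we have
\[
ba=\varphi(b\cdot 1)=\varphi(b)=\varphi(1\cdot b)=ab,
\]
so $a\in\sB'\cap\sA=\C$. Hence $\varphi(x)=\varphi(x\cdot 1)=x\varphi(1)=\lambda x\in\sB$ for some scalar $\lambda$, and the range of $\varphi$ lies in $\sB$. This establishes that $\sB$ is a generalised Cartan subalgebra. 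Notably, this step uses only irreducibility and simplicity of $\sB$, and not the regularity hypothesis.

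For the slice statement, the plan is to show that every nonzero slice has the form $\sL=u\sB=\sB u$ for some $u\in\unor{\sA}(\sB)$. Let $\sL\neq0$ be a slice and pick $0\neq n\in\sL$. Then $n^*n\in\sB$, and the closed linear span of $\sB n^*n\sB$ is a nonzero ideal of $\sB$, so it equals $\sB$ by simplicity. We aim to use this to produce a unitary $u\in\sL$ with $\sL=u\sB$.

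This is the main obstacle: in a $C^*$-algebra we have no polar decomposition available as in Proposition \ref{prop:polar-normaliser-vna}. I would first try to exploit irreducibility. For $x,y\in\sL$, the element $x^*y$ lies in $\sB$, and the map $\sL\times\sL\to\sB$, $(x,y)\mapsto x^*y$, makes $\sL$ a right Hilbert $\sB$-module. Moreover, $\sL\sL^*\subseteq\sB$, so $\sL$ is a $\sB$-$\sB$ imprimitivity bimodule whose left and right actions are both full, again by simplicity.

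The next step uses the expectation $E$ together with irreducibility. For $n\in\sL$, the element $n$ intertwines the two actions, which suggests that $\sL$ is one-dimensional over $\sB$ and is generated by a single element. Concretely, I would form $\sum_i x_i x_i^*=1$ with $x_i\in\sL$; such elements exist because the ideal is full and unital, so finitely many suffice. Assuming unitary regularity, I would then show that $\sL\cap\sU(\sA)\neq\emptyset$. To do this, approximate elements of $\sL$ by elements of $\spn\,\unor{\sA}(\sB)\sB$, and apply $E(u^*\,\cdot\,)$, together with the fact that $u^*\sL$ is again a slice. A slice $\sL'$ containing $1$ satisfies $\sL'\sL'\subseteq\sB$, and irreducibility forces $E(u^*x)$ to be "scalar-like" on $\sL$.

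Once $\sL=u\sB$ is established, the group structure is straightforward. The product $\overline{\sL_1\sL_2}=u_1u_2\sB$ is a slice, the identity is $\sB$, and the inverse is $\sL^*=u^*\sB$. Associativity is inherited from $\sA$.
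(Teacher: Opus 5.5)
Your verification that $\sB$ is a generalised Cartan subalgebra is correct. It is also more elementary than the paper's argument, which simply invokes Theorem~4.3 of \cite{KwasniewskiMeyer2020}. Checking \maxprime directly via $\varphi(1)\in\sB'\cap\sA=\C$ is clean, and indeed needs neither regularity nor $E$.

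The gap is in the slice part. Your group law is derived entirely from the claim that every nonzero slice has the form $\sL=u\sB$ with $u\in\unor{\sA}(\sB)$, and that claim is never proved. Approximating elements of $\sL$ by elements of $\spn \unor{\sA}(\sB)\sB$ cannot by itself put a unitary inside $\sL$. The remark about a slice containing $1$ presupposes $1\in u^*\sL$, which is exactly what is at issue. And ``scalar-like'' is not an argument.

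The missing step can be supplied as follows. Put $\mathcal K=u^*\sL$, which is a slice, and pick $x_i\in\mathcal K$ with $\sum_i x_ix_i^*=1$; this is possible, as you noted, because $\overline{\operatorname{span}}\,\mathcal K\mathcal K^*=\sB$. Since $x_j^*bx_i\in\mathcal K^*\mathcal K\subseteq\sB$ and $E$ is $\sB$-bimodular, the element $c:=\sum_iE(x_i)x_i^*$ satisfies
\[
bc=\sum_iE(bx_i)x_i^*=\sum_{i,j}E(x_j)x_j^*bx_ix_i^*=cb \qquad (b\in\sB).
\]
So $c=\lambda 1$ by irreducibility, and then $E(y)=\sum_iE(x_i)x_i^*y=\lambda y$ for every $y\in\mathcal K$. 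This gives a dichotomy:
\begin{itemize}
\item If $\lambda\neq0$, then $\mathcal K$ is a nonzero closed ideal of $\sB$, so $\mathcal K=\sB$ and $\sL=u\sB$.
\item If $\lambda=0$ for every $u$, then $E(a^*y)=0$ for all $y\in\sL$ and all $a$ in $\spn \unor{\sA}(\sB)$. This span is a dense $*$-subalgebra, since $\unor{\sA}(\sB)$ is a group closed under adjoints and the inclusion is unitary regular. Hence $E(y^*y)=0$, and $\sL=0$ by faithfulness.
\end{itemize}

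More to the point, you do not need this detour for the statement. You already observed that for a nonzero slice both $\overline{\operatorname{span}}\,\sL^*\sL$ and $\overline{\operatorname{span}}\,\sL\sL^*$ are nonzero closed ideals of $\sB$, hence equal to $\sB$. Take the product $\sL_1\cdot\sL_2:=\overline{\operatorname{span}}\,\sL_1\sL_2$, which is again a slice because $\sL_i^*\sB\sL_i\cup\sL_i\sB\sL_i^*\subseteq\sB$ by polarisation, with unit $\sB$. Then fullness says exactly that $\sL^*$ is a two-sided inverse of $\sL$. It also shows $\sL_1\cdot\sL_2\neq0$, since $\sL_1^*\cdot(\sL_1\cdot\sL_2)=\sL_2$. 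That is precisely the paper's argument, which takes the ideal property from Exel's Corollary~10.3(ii). Your completed route is still worth keeping, because it identifies the group of nonzero slices with the Weyl group $\unor{\sA}(\sB)/\sU(\sB)$ via $u\mapsto u\sB$.
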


\begin{proof}
Since $\unor{\sA}(\sB) \subseteq \nor{\sA}(\sB)$ and the inclusion $\sB \subseteq \sA$ is unitary regular, we have
\[
\sA = C^*\bigl(\unor{\sA}(\sB)\bigr)
   \subseteq C^*\bigl(\nor{\sA}(\sB)\bigr)
   \subseteq \sA.
\]
Hence the inclusion $\sB \subseteq \sA$ is regular.

Since $\sB$ is a simple unital $C^*$-algebra, we have
\[
\sB\sA\sB = \sA
\qquad\text{and}\qquad
\sZ(\sB)=\mathbb C.
\]
Moreover, the simplicity of $\sB$ implies that its ideal lattice is trivial:
\[
\mathcal I(\sB)=\{\{0\},\sB\}.
\]
Because the inclusion is irreducible, we have
\[
\sB' \cap \sA = \mathbb C.
\]
Therefore, Condition~(3) of \cite[Theorem~4.3]{KwasniewskiMeyer2020} is satisfied. Combining this with the existence of the faithful conditional expectation $E$ and the regularity established above, it follows from \cite[Theorem~4.3]{KwasniewskiMeyer2020} that $\sB$ is a generalised Cartan subalgebra of $\sA$.

By \cite[Corollary 10.3-(ii)]{Exel2011}, the sets \(M^{*}M\) and \(MM^{*}\) are two-sided \(^*\)-ideals of \(\sB\) for every slice \(M\) of \(\sB \subseteq \sA\). Since \(\sB\) is simple, we have \(M^{*}M = \sB = MM^{*}\), and the conclusion follows.
\end{proof}

\begin{remark}
    Given a unital inclusion of simple $C^*$-algebras $\sB\subseteq \sA$ with a conditional expectation of index-finite type is unitarily regular precisely when $\sA$ is cocycle crossed product of $\sB$ by the Weyl group $\unor{\sA}(\sB)/\mathcal{U}(\sB)$ as proved in \cite{bakshi-gupta-2025}. Recently, finite index condition has been relaxed in \cite{bakshi-etal-2026}.
\end{remark}

\begin{prop}\label{prop:reg-iff-cartan-irr}
    Let $\sB \subseteq \sA$ be a regular inclusion of unital $C^*$-algebras with a faithful conditional expectation $E:\sA \to \sB$ with $\sB$ simple. Then the following are equivalent:
    \begin{itemize}
        \item[(i)] $\sB$ is a generalised Cartan subalgebra of $\sA$.
        \item[(ii)] $\sB'\cap \sA = \C$.
    \end{itemize}
    If the above conditions hold then $\sA$ is simple.
\end{prop}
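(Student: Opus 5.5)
For (ii)$\Rightarrow$(i) I will reuse the argument of the preceding proposition. The plan is to verify the hypotheses of \cite[Theorem~4.3]{KwasniewskiMeyer2020}. Since $\sB$ is simple and unital, $\sZ(\sB)=\C$, $\sB\sA\sB=\sA$, and the ideal lattice of $\sB$ is $\{\{0\},\sB\}$. Irreducibility $\sB'\cap\sA=\C$ then gives Condition~(3) of that theorem. Together with the faithful conditional expectation $E$ and the assumed regularity, this yields that $\sB$ is a generalised Cartan subalgebra. That proposition only used unitary regularity to get regularity, so here regularity is simply assumed.

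For (i)$\Rightarrow$(ii) I intend to use property \maxprime\ directly. Take $a\in\sB'\cap\sA$ and define $\varphi\colon\sB\to\sA$ by $\varphi(x)=xa$ on the ideal $\sJ=\sB$. We have $\varphi(bx)=bxa=b\varphi(x)$, and $\varphi(xb)=xba=xab=\varphi(x)b$ since $a$ commutes with $\sB$. So $\varphi$ is a virtual commutant, and \maxprime\ forces $a=\varphi(1)\in\sB$. Thus $a\in\sB'\cap\sB=\sZ(\sB)=\C$, because $\sB$ is simple and unital. This direction is short; I do not even need regularity or $E$.

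For simplicity of $\sA$, I plan to invoke the ideal-correspondence part of \cite[Theorem~4.3]{KwasniewskiMeyer2020}, or equivalently Exel's results on generalised Cartan inclusions. The fact to use is that $\sB$ detects ideals in $\sA$, i.e.\ every nonzero ideal $I\trianglelefteq\sA$ satisfies $I\cap\sB\neq0$. This should follow from $\sB$ being a generalised Cartan subalgebra (it is aperiodic/detects ideals). Then for a nonzero ideal $I$, the set $I\cap\sB$ is a nonzero ideal of the simple algebra $\sB$. Hence $1\in I$ and $I=\sA$.

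The main obstacle is this last step: correctly citing that a generalised Cartan subalgebra detects ideals. If the citation is awkward, I will argue by hand with $E$ and slices, as follows. For nonzero $I$ pick $0\neq a\in I$. By faithfulness $E(a^*a)\neq0$. Approximate $a^*a$ by finite sums of normaliser products, and use regularity with the \maxprime-induced behaviour of $E$ on slices (Exel's results on slices, \cite[Corollary~10.3]{Exel2011}) to produce a nonzero element of $I\cap\sB$. Once $I\cap\sB\neq0$, simplicity of $\sB$ finishes the argument.
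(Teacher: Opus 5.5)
\textbf{The equivalence (i)$\Leftrightarrow$(ii) is correct.} For (ii)$\Rightarrow$(i) you reuse the check of Condition~(3) of \cite[Theorem~4.3]{KwasniewskiMeyer2020} from the preceding proposition. That check indeed uses only simplicity of $\sB$ and irreducibility. The paper instead quotes \cite[Corollary~7.4]{KwasniewskiMeyer2020} for the whole statement. Your (i)$\Rightarrow$(ii) takes a genuinely more elementary route than that citation. The virtual commutant $x\mapsto xa$ on $\sJ=\sB$ shows that \maxprime\ alone forces $\sB'\cap\sA\subseteq\sB$, hence $\sB'\cap\sA=\sZ(\sB)=\C$. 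It uses neither regularity nor $E$.

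\textbf{The gap is in the simplicity of $\sA$.} The claim that a generalised Cartan subalgebra detects ideals does not follow formally from Exel's definition. Nor does it follow from \cite[Corollary~10.3]{Exel2011}, which only tells you that nonzero slices satisfy $M^*M=MM^*=\sB$. This claim is exactly the substantive content that \cite[Corollary~7.4]{KwasniewskiMeyer2020} supplies, and your pointer to \cite[Theorem~4.3]{KwasniewskiMeyer2020} or to Exel leaves its source unresolved.

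Your by-hand fallback omits the decisive step. Even granting $E(a^*a)\neq 0$ and that $E$ vanishes on the nontrivial slices, nothing yet produces a nonzero element of $I\cap\sB$. What you need is a compression argument:
\begin{itemize}
\item Write $a^*a\approx E(a^*a)+\sum_{k=1}^n m_k$ with each $m_k$ in a nontrivial slice.
\item Find $x\in\sB^+$ with $\|x\|=1$, $\|xE(a^*a)x\|\geq\|E(a^*a)\|-\epsilon$, and $\|xm_kx\|<\epsilon/n$ for all $k$.
\item If $I\cap\sB=0$, the quotient map $q\colon\sA\to\sA/I$ is isometric on $\sB$. Then $\|xE(a^*a)x\|=\|q(xE(a^*a)x-xa^*ax)\|$ is small, a contradiction.
\end{itemize}

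Producing such an $x$ is a Kishimoto-type lemma (aperiodicity) for the nontrivial slices. This is precisely where simplicity of $\sB$ enters, together with the outerness given by \maxprime: no nontrivial slice contains a nonzero element commuting with $\sB$. In the crossed-product picture it is Kishimoto's theorem that reduced crossed products of simple algebras by outer actions are simple. It is not a soft consequence of faithfulness of $E$.

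So either cite \cite[Corollary~7.4]{KwasniewskiMeyer2020} for the last assertion, as the paper does, or supply this aperiodicity step. Once you have it, your final reduction is correct: $I\cap\sB$ is a nonzero ideal of the simple algebra $\sB$, so $1\in I$ and $I=\sA$.
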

\begin{proof}
    Follows from \cite[Corollary 7.4]{KwasniewskiMeyer2020}.
\end{proof}

\begin{prop}
    Let $\sB\subseteq^E \sA$ be a unital irreducible inclusion of simple $C^*$-algebras, where $E$ is a conditional expectation of finite Watatani index. Then the following are equivalent:
    \begin{itemize}
        \item[(i)] $\sB$ is regular in $\sA$. 
        \item[(ii)] $\sB$ is a generalised Cartan subalgebra of $\sA$.
        \item[(iii)] There exists a finite group $G$ such that
        \[
            (\sB\subseteq \sA) \cong (\sB\subseteq \sB\rtimes G).
        \]
        \item[(iv)] $\sB$ is unitary regular in $\sA$.
        \item[(v)]  $\sA$ can be obtained from $\sB$ by reduced cocycle crossed product with respect to the Weyl group. More precisely,
        \[
            (\sB\subseteq \sA) \cong (\sB\subseteq \sB\rtimes_{r,\sigma} \unor{\sA}(\sB)/\mathcal{U}(\sB)). 
        \]
    \end{itemize}
\end{prop}
\begin{proof}
    Note that $(i)\implies(ii)$ follows from Proposition \ref{prop:reg-iff-cartan-irr}. The implication $(ii)\implies(iii)$ follows from \cite[Corollary 7.4]{KwasniewskiMeyer2020}, while the implications $(iii)\implies(iv)\implies(i)$ are straightforward. The equivalence of $(iv)$ and $(v)$ has been proved in \cite{bakshi-gupta-2025}.
\end{proof}

%% file: sections/acknowledgement.tex
The first named author acknowledges the support of the grant ANRF/ECRG/2024/002328/PMS.